\newtheorem{theorem}{Theorem}[section]
\newtheorem{lemma}[theorem]{Lemma}
\newtheorem{prop}[theorem]{Proposition}
\newtheorem{exa}[theorem]{Example}
\newtheorem*{Theorem1'}{Theorem 1'}
\theoremstyle{definition}
\newtheorem{definition}[theorem]{Definition}
\theoremstyle{remark}
\numberwithin{equation}{section}
\renewcommand \ker{{\mathrm {ker}}}
\newcommand \Der{{\mathrm {Der}}}
\newcommand \GL{{\mathrm {GL}}}
\newcommand \gl{{\mathfrak {gl}}}
\newcommand \g{{\mathfrak {g}}}
\newcommand \h{{\mathfrak {h}}}
\newcommand \la{{\lambda}}
\newcommand \al{{\alpha}}
\newcommand \be{{\beta}}
\newcommand \ga{{\gamma}}
\renewcommand \sl{{\mathfrak {sl}}}
\newcommand \B{{\mathcal B}}
\newcommand \ad{{\mathrm {ad}}}
\begin{document}

\title[Free 2-step nilpotent Lie algebras and indecomposable modules]{Free 2-step nilpotent Lie algebras and indecomposable representations}

 \author{Leandro Cagliero}
\address{CIEM-CONICET, FAMAF-Universidad Nacional de C\'ordoba, Argentina.}
\email{cagliero@famaf.unc.edu.ar}
\thanks{This research was partially supported by grants from CONICET, FONCYT and SeCyT-UNC\'ordoba).}

\author{Luis Guti\'errez Frez}
\address{Instituto de Ciencias F\'{\i}sicas y Matem\'aticas, Universidad Austral de Chile, Chile}
\email{luis.gutierrez@uach.cl}

\author{Fernando Szechtman}
\address{Department of Mathematics and Statistics, Univeristy of Regina, Canada}
\email{fernando.szechtman@gmail.com}
\thanks{This research was partially supported by an NSERC grant}

\subjclass[2010]{17B10, 17B30}



\keywords{uniserial representation; free 2-step nilpotent Lie algebra}

\begin{abstract} Given an algebraically closed field $F$ of characteristic 0 and an $F$-vector space $V$,
let $L(V)=V\oplus\Lambda^2(V)$ denote the free 2-step nilpotent Lie algebra associated to $V$. In this paper, we classify all uniserial representations of the solvable Lie algebra $\g=\langle x\rangle\ltimes L(V)$, where $x$ acts on $V$ via an arbitrary invertible Jordan block.
\end{abstract}

\maketitle

\section{Introduction}
\label{intro}

We fix throughout an algebraically closed field $F$ of characteristic zero. All Lie
algebras and representations considered in this paper are assumed
to be finite dimensional over $F$, unless explicitly stated
otherwise.

According to \cite{M} (see also \cite{GP}), the task of classifying all indecomposable modules of an arbitrary Lie algebra is daunting.
However, in recent years there has been significant progress in classifying certain types of indecomposable modules for various
families of Lie algebras. See \cite{C1, C2, CGS, CPS, CS1, CS2, CS3, CMS, DdG, DR, J}, for example. The classification of all uniserial modules (those having a unique composition series) of distinguished classes of Lie algebras has been specially successful (see \cite{CPS,CGS,CS1,CS3}, for instance).

In this paper,  we make a further contribution in this direction by classifying all uniserial representations of the solvable Lie algebra $\g=\langle x\rangle\ltimes L(V)$,
where $V$ is a vector space, $L(V)=V\oplus\Lambda^2(V)$ is the free 2-step nilpotent Lie algebra associated to $V$,
and $x$ acts on $V$ via a single Jordan block $J_n(\la)$, with $\la\neq 0$. The case $n=1$, when $\Lambda^2(V)=0$, is covered in \cite{CS2},
so we will focus attention on the case $n>1$.

We say that a uniserial representation $R:\g\to\gl(U)$ is
\emph{relatively faithful} if $\ker(R)\cap\Lambda^2(V)$ is properly
contained in $\Lambda^2(V)$ and $\ker(R)\cap V=(0)$. It suffices
to consider the case when $R$ is relatively faithful, for if
$\Lambda^2(V)\subseteq \ker(R)$ then \cite{CPS} applies, if
$V\subseteq \ker(R)$ we may appeal to \cite{CS1}, and if
$(0)\neq \ker(R)\cap V\neq V$, we are led to consider a uniserial
representation $\overline{R}$ of $\langle \overline{x}\rangle\ltimes L(\overline{V})$,
where $\overline{V}$ is a factor of~$V$ by an $x$-invariant subspace,
$\overline{x}$ acts on $\overline{V}$ via an invertible Jordan block $J_m(\la)$, $1\leq m<n$,
and $\ker(\overline{R})\cap\overline{V}=(0)$.

Our main results are as follows.
In \S\ref{sec1} we define a family of relatively faithful uniserial representations of $\g$ (the case $\la=0$ being allowed).
Explicitly,  let $v_0,\dots,v_{n-1}$ be a basis of $V$ such that
$$
[x,v_0]=\la v_0+v_1, [x,v_1]=\la v_1+v_2,\dots, [x,v_{n-1}]=\la v_{n-1}.
$$
Given a triple $(a,b,c)$ of positive integers satisfying
$$
a+b=n+1,\; c\leq a\quad\text{ or }\quad c+b=n+1,\; a\leq c,
$$
two matrices $M\in M_{a\times b}$ and $N\in M_{b\times c}$ such that $$M_{a,1}\neq 0\text{ and }N_{b,1}\neq 0,$$
and a scalar $\al\in F$, we define a representation
$R=R_{a,b,c,M,N,\al}:\g\to\gl(d)$, $d=a+b+c$, in block form, in the following manner:
$$
R(x)=A=\left(
               \begin{array}{ccc}
                 J^a(\al) & 0 & 0 \\
                 0 & J^b(\al-\la) & 0 \\
                 0 & 0 & J^c(\al-2\la) \\
               \end{array}
             \right),
$$
where $J^p(\be)$ denotes the upper triangular Jordan block of size $p$ and eigenvalue~$\be$,
$$
R(v_k)=(\ad_{\gl(d)} A-\la 1_{\gl(d)})^k \left(
               \begin{array}{ccc}
                 0 & M & 0 \\
                 0 & 0 & N \\
                 0 & 0 & 0 \\
               \end{array}
             \right),\quad 0\leq k\leq n-1,
$$
$$
R(v\wedge w)=[R(v),R(w)],\quad v,w\in V.
$$

The representation~$R$ is always uniserial. It is also relatively faithful, except for an extreme case,
as described in Definition \ref{defeabc}. The length of~$R$, as defined in Definition \ref{dele}, is equal to 3 (it coincides
with the number of Jordan blocks of $R(x)$ in this case).

Conjugating all $R(y)$, $y\in \g$, by a suitable block diagonal matrix commuting with~$A$, one may normalize $R$,
in the sense of Definition \ref{defeabc}.
In \S\ref{sec3} we prove, for $\la\neq 0$, that every relatively faithful uniserial representation of $\g$
is isomorphic to one and only one normalized representation $R_{a,b,c,M,N,\al}$ of non-extreme type. This requires, in particular,
to prove that $\g$ has no relatively faithful uniserial representations of length $>3$. This is our most challenging obstacle, and it is proven in Theorem \ref{main1}. The ideas behind the proof of Theorem \ref{main1} are somewhat subtle
and are presented independently in \S\ref{secnueva}.

We are be very interested in knowing the classification of all uniserial modules of $\g$ when $\lambda=0$
(the case when $\g$ is nilpotent), but this seems to be a very difficult task.

In \S\ref{sec2} we determine when $R_{a,b,c,M,N,\al}$ is faithful (for arbitrary $\lambda$). It turns out that $R_{a,b,c,M,N,\al}$ is faithful
if and only if
$$
(a,b,c)\in\{(n,1,n),(n-1,2,n-1),(n,1,n-1),(n-1,1,n)\}.
$$
Sufficiency of this result is fairly delicate. Most of the work towards it is done in Proposition \ref{fieln-2}.
The case $n=3$ and $(a,b,c)=(2,2,2)$ is special, in the sense that it is the only faithful uniserial representation of $\g$
where all blocks are square (in this case of size~2). This case is intimately related to a representation of
the truncated current Lie algebra $\sl(2)\otimes F[t]/(t^3)$.

In \S\ref{secnew} we provide a generalization of our faithfulness result, stated without reference to Lie algebras
or their representations.

Our general notation, basic concepts and preliminary material can all be found in \S\ref{sec0}, \S\ref{sec1} and \S\ref{sec2}.

\section{The Lie algebra $\g$}\label{sec0}

We fix throughout a vector space $V$. There is a unique Lie algebra structure on
$$
L(V)=V\oplus\Lambda^2(V)
$$
such that
$$
[v,w]=v\wedge w,\quad v,w\in V
$$
and
$$
[u,v\wedge w]=0, \quad u,v,w\in V.
$$
The Lie algebra $L(V)$ is the \emph{free 2-step nilpotent Lie algebra associated to $V$}.
In particular we have the following straightforward lemma.

\begin{lemma}\label{2free} Let $\h$ be a Lie algebra and let $\Omega:V\to\h$ be a linear map satisfying
\begin{equation*}
[\Omega(V),[\Omega(V),\Omega(V)]]=0.
\end{equation*}
Then $\Omega$ has a unique extension to a homomorphism of Lie algebras $\Omega':L(V)\to\h$.
\end{lemma}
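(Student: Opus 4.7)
The plan is to first observe that uniqueness is forced, then build the extension using the universal property of $\Lambda^2(V)$, and finally verify that the bracket is preserved through a short case analysis.

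For uniqueness, I would note that $L(V)$ is generated as a Lie algebra by $V$; concretely, since $[v,w]=v\wedge w$ inside $L(V)$, any Lie algebra homomorphism $\Omega':L(V)\to\h$ extending $\Omega$ is forced to satisfy $\Omega'(v\wedge w)=\Omega'([v,w])=[\Omega(v),\Omega(w)]$ for all $v,w\in V$. This determines $\Omega'$ on the entire second summand by linearity, so there is at most one extension.

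For existence, I would take the same formula as the definition: set $\Omega'|_V=\Omega$ and $\Omega'(v\wedge w)=[\Omega(v),\Omega(w)]$, extended linearly on $\Lambda^2(V)$. The assignment $(v,w)\mapsto[\Omega(v),\Omega(w)]$ is bilinear and alternating (using $[z,z]=0$ in any Lie algebra), so by the universal property of the exterior square it descends to a well-defined linear map $\Lambda^2(V)\to\h$. Thus $\Omega'$ is a well-defined linear map on $L(V)=V\oplus\Lambda^2(V)$ extending $\Omega$.

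The remaining task, and the only place where the hypothesis $[\Omega(V),[\Omega(V),\Omega(V)]]=0$ is actually used, is to verify $\Omega'([X,Y])=[\Omega'(X),\Omega'(Y)]$ for all $X,Y\in L(V)$. By bilinearity I would split into three cases. (i) If $X,Y\in V$, the identity holds by the very definition of $\Omega'$ on $\Lambda^2(V)$. (ii) If $X\in V$ and $Y\in\Lambda^2(V)$, the left-hand side vanishes because $V$ commutes with $\Lambda^2(V)$ in $L(V)$, while the right-hand side is a sum of terms $[\Omega(u),[\Omega(v),\Omega(w)]]$, each of which vanishes by hypothesis. (iii) If $X,Y\in\Lambda^2(V)$, the left-hand side is zero again; the right-hand side consists of terms $[[\Omega(a),\Omega(b)],[\Omega(c),\Omega(d)]]$, and a single application of the Jacobi identity in $\h$ rewrites each such term as a combination of brackets of the form appearing in case (ii), all of which vanish. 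The closest thing to an obstacle is case (iii), and it is dispatched by Jacobi; everything else is formal.
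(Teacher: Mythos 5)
Your proof is correct and is exactly the argument the paper leaves implicit: the paper states this lemma without proof, calling it ``straightforward'' from the freeness of $L(V)$, and your verification (uniqueness from $V$ generating $L(V)$, existence via the universal property of $\Lambda^2(V)$, and the case analysis where the hypothesis kills case (ii) and one application of Jacobi reduces case (iii) to case (ii)) is the standard way to make that precise. No gaps.
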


Given a Lie algebra $\h$ and a representation $\h\to\gl(V)$, we can make $\Lambda^2(V)$ into an $\h$-module via:
$$
x(v\wedge w)=xv\wedge w+v\wedge xw,\quad x\in\h, v,w\in V.
$$
This gives a representation $\h\to\gl(L(V))$ whose image we readily see to be in $\Der(L(V))$. This produces the Lie algebra
$$
\h\ltimes L(V).
$$
For the remainder of the paper we set
$$
\g=\langle x\rangle\ltimes L(V),
$$
where $x\in\gl(V)$.

\section{Relatively faithful uniserial representations of $\g$}\label{sec1}

Given $p\geq 1$ and $\al\in F$,
we write $J_p(\al)$ (resp. $J^p(\al)$) for the lower (resp. upper) triangular Jordan block of size $p$ and eigenvalue $\al$.

We suppose throughout this section that $\g=\langle x\rangle \ltimes L(V)$, where $x\in\gl(V)$ acts on $V$ via a single, lower triangular, Jordan block, say $J_n(\la)$ with $n>1$, relative to a basis
$v_0,\dots,v_{n-1}$ of $V$. The case $\la=0$ is allowed. Then $\g$ has the following defining relations:
\begin{equation}
\label{rela}
[v,w]=v\wedge w,\quad v,w\in V,
\end{equation}
\begin{equation}
\label{rela2}
[u,v\wedge w]=0, \quad u,v,w\in V,
\end{equation}
\begin{equation}
\label{rew}
[x,v_0]=\la v_0+v_1, [x,v_1]=\la v_1+v_2,\dots, [x,v_{n-1}]=\la v_{n-1}.
\end{equation}
We may translate (\ref{rew}) as
\begin{equation}
\label{rela3}
(\ad_\g\, x -\lambda 1_\g)^k v_0=v_k,\quad 0\leq k\leq n-1,
\end{equation}
and
\begin{equation}\label{rela4}
(\ad_\g x -\lambda 1_\g)^n v_0=0.
\end{equation}

\begin{definition}\label{dele}
Let $U$ be a non-zero $\g$-module.  Let $U_1$ be the subspace of $U$ annihilated by
$[\g,\g]$. Since $[\g,\g]$ is an ideal of $\g$, it is clear that $U_1$ is a $\g$-submodule of $U$. Moreover, since $[\g,\g]$ acts via nilpotent
operators on $U$, Engel's theorem ensures that $U_1\neq 0$. We then choose $U_2$ so that $U_2/U_1$ is the subspace of $U/U_1$ annihilated by $[\g,\g]$, and so on. This gives rise to a strictly increasing sequence of $\g$-submodules of $U$, namely
$$
0\subset U_1\subset U_2\subset\cdots\subset U_\ell=U.
$$
We define the \emph{length} of $U$ to be $\ell$.
Note that, since $\g$ is solvable and $F$ is algebraically closed,
the length of a Jordan-H\"older composition series of $U$ is $\dim U$.
\end{definition}

\begin{definition}\label{defeabc} Let $(a,b,c)$ be a triple of positive integers satisfying
\begin{equation}
\label{cond}
a+b=n+1,\; c\leq a\quad\text{ or }\quad c+b=n+1,\; a\leq c,
\end{equation}
let $M\in M_{a\times b}$, $N\in M_{b\times c}$ be such that $$M_{a,1}\neq 0\text{ and }N_{b,1}\neq 0,$$
and let $\al\in F$. Associated to this data we define a linear transformation
$R=R_{a,b,c,M,N,\al}:\g\to\gl(d)$, $d=a+b+c$, in block form, as follows:
\begin{equation}
\label{pet}
R(x)=A=\left(
               \begin{array}{ccc}
                 J^a(\al) & 0 & 0 \\
                 0 & J^b(\al-\la) & 0 \\
                 0 & 0 & J^c(\al-2\la) \\
               \end{array}
             \right),
\end{equation}
\begin{equation}
\label{pet2}
R(v_k)=(\ad_{\gl(d)} A-\la 1_{\gl(d)})^k \left(
               \begin{array}{ccc}
                 0 & M & 0 \\
                 0 & 0 & N \\
                 0 & 0 & 0 \\
               \end{array}
             \right),\quad 0\leq k\leq n-1,
\end{equation}
\begin{equation}
\label{pet3}
R(v\wedge w)=[R(v),R(w)],\quad v,w\in V.
\end{equation}

We refer to $M$ and $N$ as \emph{normalized}, if the last rows of $M$ and $N$ are equal to the first
canonical vectors of $F^b$ and $F^c$, respectively, and the first column of~$M$ is equal to the last canonical
vector of $F^a$. In this case, we say that $R$ itself is \emph{normalized}.
If $R$ is normalized, we say that $R$ is of \emph{extreme type} if $n$ is odd,
$a=1$, $c=1$ and $N_{i,1}=0$ for all even $i$.
\end{definition}

Conjugating all
$R(y)$, $y\in \g$, by a suitable block diagonal matrix commuting with~$A$, it is always possible to normalize $R$,
as seen in \cite[Lemma 2.5]{CPS}.

\begin{prop}\label{normali} The linear map $R_{a,b,c,M,N,\al}$ is a uniserial
representation.
\end{prop}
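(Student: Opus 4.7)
I would split the argument into two parts: first verify that $R$ is a Lie algebra homomorphism, then show the resulting $\g$-module $U = F^d$ is uniserial by identifying its (unique) composition series.

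For the representation check, I note that each $R(v_k) = (\ad A - \la)^k X$ has matrix support contained in the off-diagonal blocks $(1,2)\cup(2,3)$, so every commutator $[R(v_i), R(v_j)]$ is supported in block $(1,3)$ alone; a direct block-multiplication then shows that the product of a $(1,3)$-supported matrix with a $(1,2)\cup(2,3)$-supported one vanishes, giving $[R(V), [R(V), R(V)]] = 0$, so Lemma \ref{2free} extends $v_k \mapsto R(v_k)$ to a homomorphism $L(V) \to \gl(d)$. To upgrade to $\g \to \gl(d)$ it suffices to verify $[R(x), R(v_k)] = R([x, v_k])$ for each $k$; the identity $[A, (\ad A - \la)^k X] = (\ad A - \la)^{k+1}X + \la(\ad A - \la)^k X$ handles $k < n-1$ at once, and the Jacobi identity then forces the derivation property for $v \wedge w$. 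The substantive case $k = n-1$ reduces to the nilpotency claim $(\ad A - \la)^n X = 0$. In block form this says $T_1^n M = 0$ and $T_2^n N = 0$ where $T_1 M = J^a(0) M - M J^b(0)$ and $T_2 N = J^b(0) N - N J^c(0)$; since left and right multiplication by commuting nilpotents commute, binomial expansion yields
\[
(T_1^n M)_{p,q} = \sum_{i=0}^n \binom{n}{i}(-1)^{n-i} M_{p+i,\, q-n+i},
\]
and a valid index requires $n+1-q \leq i \leq a-p$. The inequalities $a+b \leq n+1$ and $b+c \leq n+1$ (which hold in both alternatives of (\ref{cond})), together with $p \geq 1$ and $q \leq b$ (respectively $q \leq c$), empty this range for every admissible $(p,q)$; this is the main technical obstacle.

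For uniseriality I observe that $A$ is upper triangular in the basis $e_1, \ldots, e_d$ while each $R(v_k)$ and $R(v \wedge w)$ is strictly upper triangular (their nonzero entries lying in blocks $(1,2)\cup(2,3)$, respectively $(1,3)$). Consequently the full flag $W_k := F e_1 + \cdots + F e_k$ is a chain of $\g$-submodules and hence a composition series of length $d = \dim U$. To prove it is the \emph{unique} composition series I argue by induction on $k$ that the socle of $U/W_k$ equals $F \bar e_{k+1}$. A one-dimensional $\g$-submodule of $U/W_k$ is spanned by a joint $A$-eigenvector annihilated by every $R(v_j)$; such candidates are combinations of the ``bottom of each remaining Jordan block'' vectors, namely $\bar e_{k+1}$, together with $\bar e_{a+1}$ (if $k < a$) and $\bar e_{a+b+1}$ (if $k < a+b$). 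But
\[
R(v_0)\bar e_{k+1} = 0, \qquad R(v_0)\bar e_{a+1} = \overline{M_{\cdot,1}}, \qquad R(v_0)\bar e_{a+b+1} = \overline{N_{\cdot,1}},
\]
and the hypotheses $M_{a,1} \neq 0$ and $N_{b,1} \neq 0$ guarantee that $\overline{M_{\cdot,1}}$ is a nonzero block-$1$ vector of $U/W_k$ when $k < a$, and $\overline{N_{\cdot,1}}$ is a nonzero block-$2$ vector when $k < a+b$. Since these two images lie in distinct blocks they are linearly independent, so any combination $c_1 \bar e_{k+1} + c_2 \bar e_{a+1} + c_3 \bar e_{a+b+1}$ annihilated by $R(v_0)$ has $c_2 = c_3 = 0$, isolating $F \bar e_{k+1}$ as the socle; the terminal case $k \geq a+b$, where $L(V)$ acts trivially on $U/W_k$ and $A$ restricts to a single Jordan block, is immediate. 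Once the nilpotency identity is established, uniseriality thus follows mechanically from the block-triangular structure and the two nonvanishing hypotheses on $M$ and $N$.
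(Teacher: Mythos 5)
Correct, and essentially the paper's own argument: Lemma~\ref{2free} disposes of the $L(V)$-relations, the single substantive point is that $(\ad_{\gl(d)}A-\la 1_{\gl(d)})^n$ annihilates the matrix in (\ref{pet2}) --- which the paper obtains by citing \cite[Proposition 2.2]{CPS} and you prove directly, and correctly, via the binomial expansion and the emptiness of the index range $n+1-q\le i\le a-p$ under $a+b\le n+1$, $b+c\le n+1$ (both inequalities do hold in either alternative of (\ref{cond})) --- and uniseriality follows from the block-triangular shape together with $M_{a,1}\neq 0$, $N_{b,1}\neq 0$. Your socle induction, which correctly covers the permitted case $\la=0$ where the three diagonal eigenvalues coincide and genuine linear combinations of the block-top eigenvectors must be ruled out, simply fleshes out what the paper dismisses as ``clearly uniserial.''
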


\begin{proof} It follows from Lemma \ref{2free} that (\ref{pet2})-(\ref{pet3}) define a Lie homomorphism $L(V)\to\gl(d)$.
By (\ref{cond}), we have $a+b\leq n+1$ and $b+c\leq n+1$, so \cite[Proposition 2.2]{CPS} ensures that the relations (\ref{rela3}) and (\ref{rela4})
are preserved, whence $R$ is a representation. Since $M_{a,1}\neq 0$ and $N_{b,1}\neq 0$, $R$ is clearly uniserial.
\end{proof}

\begin{prop}\label{normal2} Assume $\lambda\ne0$. The normalized representations $R_{a,b,c,M,N,\al}$ are non-isomorphic to each other.
The normalized representation $R_{a,b,c,M,N,\al}$ is relatively faithful, except only for the extreme type.
\end{prop}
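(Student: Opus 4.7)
My plan splits the proposition into three parts: non-isomorphism, injectivity of $R|_V$, and non-vanishing of $R$ on $\Lambda^2(V)$ outside extreme type. Since $\la\neq 0$, the eigenvalues $\al,\al-\la,\al-2\la$ of $A=R(x)$ are distinct, so $(a,b,c,\al)$ is an isomorphism invariant. Any isomorphism between two normalized representations sharing these invariants is implemented by a matrix $P$ with $PA=AP$, which forces $P=\mathrm{diag}(P_1,P_2,P_3)$ with each $P_i$ upper-triangular Toeplitz (the centralizer of a single Jordan block), and conjugation sends $M\mapsto P_1 M P_2^{-1}$, $N\mapsto P_2 N P_3^{-1}$. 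Since the last row of $P_1$ has the form $t_1 e_a^T$, requiring the new $(1,2)$-block still to satisfy $M_{a,\cdot}=e_1^T$ forces $P_2=t_1 I_b$; then $N_{b,\cdot}=e_1^T$ forces $P_3=t_1 I_c$; and $M_{\cdot,1}=e_a$ forces $P_1=t_1 I_a$. So $P$ is scalar and $(M,N)$ is an isomorphism invariant.

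For injectivity of $R|_V$, expanding $\ad A-\la I$ on the $(1,2)$- and $(2,3)$-block positions yields $(M_k)_{r,s}=\sum_{p+q=k}\binom{k}{p}(-1)^q M_{r+p,s-q}$ for the $(1,2)$-block $M_k$ of $R(v_k)$, with the analogous formula for the $(2,3)$-block $N_k$ (out-of-range indices contributing $0$). From $M_{\cdot,1}=e_a$ one reads off that the first column of $M_k$ is $e_{a-k}$, and from $M_{a,\cdot}=e_1^T$ that the last row of $M_k$ is $(-1)^k e_{k+1}^T$ (zero when out of range). Suppose $\sum c_k R(v_k)=0$. In Case 1 of \eqref{cond}, the first column of $\sum c_k M_k=0$ gives $c_0=\dots=c_{a-1}=0$. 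For each $s\in\{2,\dots,b\}$, exactly one term of $(M_k)_{1,s}$ touches the normalized boundary of $M$---at $k=s+a-2$, with value $\binom{s+a-2}{a-1}(-1)^{s-1}\neq 0$---while $(M_k)_{1,s}=0$ for $k>s+a-2$ and depends only on free interior entries of $M$ for $k<s+a-2$; an induction on $s$ kills $c_a,\dots,c_{n-1}$. In Case 2, the last row of $\sum c_k M_k=0$ gives the base $c_0=\dots=c_{b-1}=0$, and the analogous induction on position $(1,s)$ of $N_k$ for $s=2,\dots,c$ kills $c_b,\dots,c_{n-1}$.

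For non-vanishing on $\Lambda^2(V)$, compute the $(1,3)$-block of $[R(v_0),R(v_{n-1})]$, namely $M N_{n-1}-M_{n-1}N$. When $n-1\leq a+b-2$, $M_{n-1}$ is concentrated at the cell $(1,b)$ with nonzero value $\binom{n-1}{a-1}(-1)^{b-1}$; symmetrically $N_{n-1}$ is concentrated at $(1,c)$ when $n-1\leq b+c-2$. So $M_{n-1}N$ is concentrated at $(1,1)$ and $MN_{n-1}$ at $(a,c)$. If $(a,c)\neq(1,1)$, these cells are distinct and the commutator is nonzero---covering every case with $a\geq 2$ or $c\geq 2$. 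If $a=c=1$, the contributions collide at $(1,1)$ to yield $1-(-1)^{n-1}$, nonzero for $n$ even. For $n$ odd and $a=c=1$, the formula $(M_i N_j-M_j N_i)_{1,1}=[(-1)^i-(-1)^j]\,N_{i+j+1,1}$ shows that the commutator vanishes for all $i,j$ precisely when $N_{e,1}=0$ for every even $e$---exactly the extreme-type condition.

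The main obstacle is the injectivity argument: the normalization pins down only two slices of $M$ and one slice of $N$, so one must carefully isolate those rows and columns of the $M_k,N_k$ on which free interior entries do not contaminate the equations. The two cases of condition \eqref{cond} require genuinely different inductive strategies---via the $(1,2)$-block in Case 1 and chaining into the $(2,3)$-block in Case 2.
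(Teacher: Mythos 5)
Your proof is correct, and it reaches the same conclusions as the paper by a partially different, more self-contained route. The uniqueness half is essentially identical to the paper's argument: any intertwiner commutes with $A$, hence is block-diagonal with upper triangular Toeplitz blocks (the centralizer of a single Jordan block), and the three normalization constraints on $M_{a,\cdot}$, $N_{b,\cdot}$, $M_{\cdot,1}$ successively force $P_2$, $P_3$, $P_1$ scalar with a common scalar, so $(M,N)$ is an invariant; your preliminary reduction to equal $(a,b,c,\al)$ via eigenvalues matches the paper's. Where you genuinely diverge is in the relative-faithfulness half: the paper disposes of it by citation, getting $\ker(R)\cap V=(0)$ from \cite[Proposition 2.2]{CPS} (since the chain of $x$-invariant subspaces of $V$ reduces everything to $R(v_{n-1})\neq 0$, which that proposition supplies when $a+b=n+1$ or $b+c=n+1$), and invoking \cite[Theorem 3.2]{CPS} to reduce $\Lambda^2(V)\subseteq\ker(R)$ to the case $n$ odd, $a=c=1$, before the same direct computation of the condition $N_{i,1}=0$ for even $i$. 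You instead derive everything from the explicit expansion $(\Phi^k M)_{r,s}=\sum_{p+q=k}\binom{k}{p}(-1)^q M_{r+p,s-q}$: your two-case induction proves the stronger fact that $R(v_0),\dots,R(v_{n-1})$ are linearly independent (more than the chain argument requires, but correct), and your $(1,3)$-block collision analysis --- $M_{n-1}N$ concentrated at $(1,1)$, $MN_{n-1}$ at $(a,c)$, colliding only when $a=c=1$ with value $1-(-1)^{n-1}$, plus the identity $(M_iN_j-M_jN_i)_{1,1}=[(-1)^i-(-1)^j]N_{i+j+1,1}$ --- replaces the appeal to \cite[Theorem 3.2]{CPS} and correctly recovers the extreme-type condition in both directions. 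What your approach buys is self-containedness and explicit nonzero constants such as $\binom{s+a-2}{a-1}(-1)^{s-1}$; what it costs is length. Two cosmetic points: since \eqref{cond} forces $a+b\leq n+1$, your hypothesis ``$n-1\leq a+b-2$'' can only hold with equality, i.e.\ $a+b=n+1$ (and when $a\neq c$ exactly one of $M_{n-1}$, $N_{n-1}$ survives, which your argument handles implicitly); and in the induction the phrase that lower terms ``depend only on free interior entries'' is inessential --- those terms also involve pinned zeros, but their coefficients $c_k$ are already dead, which is all you use.
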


\begin{proof} Considering the eigenvalues of the image of $x$ as well as their multiplicities, the only possible isomorphisms are easily seen to be between $R_{a,b,c,M,N,\al}$ and $R_{a,b,c,M',N',\al}$. Suppose $T\in\GL(d)$, $d=a+b+c$, satisfies
$$
T R_{a,b,c,M,N,\al}(y) T^{-1}=R_{a,b,c,M',N',\al}(y),\quad y\in\g.
$$
Then $T$ commutes with $R_{a,b,c,M,N,\al}(x)=J^a(\al)\oplus J^{b}(\al-\la)\oplus J^c(\al-2\la)$, and therefore $T=T_1\oplus T_2\oplus T_3$,
where $T_1,T_2,T_3$ are polynomials in $J^a(0),J^b(0),J^c(0)$, respectively, with non-zero constant term.
This means that every superdiagonal
of~$T_i$, $1\leq i\leq 3$, has equal entries. Using this feature of $T_1,T_2,T_3$ in
$$
T R_{a,b,c,M,N,\al}(v_0) =R_{a,b,c,M',N',\al}(v_0)T
$$
together with the fact that $M,N$ and $M',N'$ are normalized, we readily find that~$T$ is a scalar operator, whence $M=M'$ and $N=N'$.

Since $a+b=n+1$ or $b+c=n+1$, \cite[Proposition 2.2]{CPS} yields $\ker(R)\cap V=(0)$.
It remains to determine when
is $\Lambda^2(V)\subseteq \ker(R)$. By \cite[Theorem 3.2]{CPS}, this can only happen when $n$ is odd, $a=1$, $c=1$,
in which case direct computation forces $N_{i,1}=0$ for all even $i$.
\end{proof}

\section{Determining the faithful uniserial representations of $\g$}\label{sec2}

We assume throughout this section that $\g=\langle x\rangle \ltimes L(V)$, where $x$ acts on $V$ via a single lower Jordan
block $J_n(\la)$, $n>1$, relative to a basis $v_0,\dots,v_{n-1}$ of $V$.

\begin{definition} Given a sequence $(d_1,\dots,d_\ell)$ of positive integers, we view
every $M\in M_d$,  for $d=d_1+\cdots+d_\ell$, as partitioned into $\ell^2$ blocks
$M(i,j)\in M_{d_1\times d_j}$, $1\leq i,j\leq \ell$. For $0\leq i\leq \ell-1$,
by the $i$th superdiagonal of $M$ we mean the blocks $M(1,1+i),M(2,2+i),\dots,M(\ell-i,\ell)$,
and we say that $M$ is an $i$-diagonal block
matrix if all other blocks of $M$ are equal to 0.
We refer to $M$ as block upper triangular if $M(i,j)=0$ for all $i>j$ and as block
strictly upper triangular if $M(i,j)=0$ for all $i\geq j$.
\end{definition}

\begin{definition} Given an integer $\ell>2$, a sequence of positive integers $(d_1,\dots,d_\ell)$, and a scalar $\al\in F$,
a representation $R:\g\to\gl(d)$ is said to be \emph{standard} relative to $(\ell,(d_1,\dots,d_\ell),\al)$
if the following conditions hold:
$$d_1+\cdots+d_\ell=d;\quad d_{i}+d_{i+1}\leq n+1\text{ for all }i;$$
$R(x)$ is the 0-diagonal block matrix
$$
A=J^{d_1}(\alpha)\oplus J^{d_2}(\alpha-\la)\oplus\cdots\oplus J^{d_\ell}(\alpha-(\ell-1)\la);
$$
every $R(v)$, $v\in V$, is a 1-diagonal block matrix; every block in the first superdiagonal of $R(v_0)$
has non-zero bottom left entry.

Let $M_1,\dots,M_{\ell-1}$ denote the blocks in the first superdiagonal of $R(v_0)$.
We say that $R$ is \emph{normalized standard} relative to $(\ell,(d_1,\dots,d_\ell),\al)$ if,
in addition to the above conditions, the last row of each $M_i$ is equal to the first canonical vector,
and the first column of $M_{1}$ is the last canonical vector.
\end{definition}

Note that a standard representation $R$ is always uniserial, and its length,
as defined in Definition \ref{dele}, is equal to $\ell$.
Observe also that if $R$ is a standard representation then every $R(v\wedge w)$, $v,w\in V$,
is a 2-diagonal block matrix.

\begin{lemma}\label{funk} Given an integer $\ell>2$, a sequence of positive integers $(d_1,\dots,d_\ell)$, and a scalar $\al\in F$,
let $R:\g\to\gl(d)$ be a standard representation relative to them. Then $\ker(R)\cap V=(0)$
if and only if $d_{i}+d_{i+1}=n+1$ for at least one $i$.
\end{lemma}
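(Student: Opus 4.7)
The plan is to reduce the question about $\ker(R)\cap V$ to a question about the orbits of certain nilpotent operators on the individual first-superdiagonal blocks of $R(v_0)$. Since $v_k=(\ad_\g x-\la 1_\g)^k v_0$ and $R$ is a Lie algebra homomorphism, $R(v_k)=T^k R(v_0)$ where $T:=\ad_{\gl(d)}A-\la 1_{\gl(d)}$. Because $A$ is block-diagonal, $T$ preserves the block structure (acting on the $(i,j)$ block independently of the others), and because $R(v_0)$ is 1-diagonal in block form, the entire orbit stays 1-diagonal. A short direct computation shows that on the block at position $(i,i+1)$, the diagonal scalars of $A$ and the extra $-\la$ all cancel, leaving the action
$$
\ad_i(M)=J^{d_i}(0)\,M - M\,J^{d_{i+1}}(0),\qquad M\in M_{d_i\times d_{i+1}}.
$$

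Next I would analyze the single operator $\ad_i$. Writing $\ad_i=L-R$ with $L$ left multiplication by $J^{d_i}(0)$ and $R$ right multiplication by $J^{d_{i+1}}(0)$, these commute and are nilpotent of indices $d_i$ and $d_{i+1}$, so the binomial expansion immediately yields $\ad_i^{\,d_i+d_{i+1}-1}=0$. Expanding entries using the same formula,
$$
\bigl[\ad_i^{\,k}(M)\bigr]_{r,s}=\sum_{t}(-1)^{t}\binom{k}{t}M_{r+k-t,\,s-t},
$$
one checks that for $k=d_i+d_{i+1}-2$ the only surviving index in position $(1,d_{i+1})$ is $t=d_{i+1}-1$, giving $(-1)^{d_{i+1}-1}\binom{d_i+d_{i+1}-2}{d_{i+1}-1}\,M_{d_i,1}$. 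This is precisely the extremal computation packaged in \cite[Proposition 2.2]{CPS}. Since the standard hypothesis furnishes $(M_i)_{d_i,1}\neq 0$, the cyclic chain $M_i,\ad_i(M_i),\ldots,\ad_i^{\,d_i+d_{i+1}-2}(M_i)$ is linearly independent in $M_{d_i\times d_{i+1}}$ and all higher iterates vanish.

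Finally I would reassemble the per-block information. An element $v=\sum_{k=0}^{n-1}c_k v_k$ lies in $\ker(R)$ iff for every $i\in\{1,\ldots,\ell-1\}$,
$$
\sum_{k=0}^{n-1} c_k\,\ad_i^{\,k}(M_i)=0,
$$
which by the previous paragraph is equivalent to $c_0=c_1=\cdots=c_{d_i+d_{i+1}-2}=0$. Intersecting over $i$, the condition $\ker(R)\cap V=(0)$ becomes $\max_i(d_i+d_{i+1})\geq n+1$; combined with the standing inequality $d_i+d_{i+1}\leq n+1$ built into the definition of standard, this is equivalent to $d_i+d_{i+1}=n+1$ for at least one $i$. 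The only genuinely delicate step is the extremal entry computation identifying $(M_i)_{d_i,1}$ as the sole surviving term in $\bigl[\ad_i^{\,d_i+d_{i+1}-2}(M_i)\bigr]_{1,d_{i+1}}$; everything else is bookkeeping, and that computation is exactly the content of the previously cited proposition.
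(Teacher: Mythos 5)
Your proof is correct, and while it leans on the same key fact as the paper — the extremal computation of \cite[Proposition 2.2]{CPS}, i.e.\ that the nilpotency degree of $\ad_i$ on the cyclic module generated by a block with nonzero bottom-left entry is exactly $d_i+d_{i+1}-1$ — your route to the lemma is genuinely different and more computational. The paper's argument is two lines: since $\ker(R)\cap V$ is invariant under $\ad\,x$ (as $\ker(R)$ is an ideal) and the $x$-invariant subspaces of a single Jordan block form a chain with $\langle v_{n-1}\rangle$ at the bottom, one has $\ker(R)\cap V=(0)$ if and only if $v_{n-1}\notin\ker(R)$; and $R(v_{n-1})=T^{n-1}R(v_0)\neq 0$ holds, block by block, exactly when $d_i+d_{i+1}\geq n+1$ for some $i$, which the standing bound $d_i+d_{i+1}\leq n+1$ upgrades to equality. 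You bypass the chain observation entirely and instead compute $\ker(R)\cap V$ outright: you prove the orbit $M_i,\ad_i(M_i),\dots,\ad_i^{\,d_i+d_{i+1}-2}(M_i)$ is linearly independent (the standard nilpotent-operator fact, valid because the top iterate survives — your identification of $(-1)^{d_{i+1}-1}\binom{d_i+d_{i+1}-2}{d_{i+1}-1}(M_i)_{d_i,1}$ as the sole contribution to the $(1,d_{i+1})$ entry is correct, and the binomial coefficient is nonzero in characteristic $0$), so that $\sum_k c_k v_k$ dies on block $i$ precisely when $c_0=\dots=c_{d_i+d_{i+1}-2}=0$, and then you intersect over $i$. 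This yields strictly more information than the lemma requires — the chain argument reduces everything to testing the single vector $v_{n-1}$ — but in exchange your version is self-contained (you re-derive the content of \cite[Proposition 2.2]{CPS} rather than citing it as a black box) and the orbit-independence statement you establish is in any case the structural fact underlying uniseriality of standard representations. In short: the paper's reduction is slicker; yours is more explicit and reusable, and both rest on the same nilpotency-degree computation.
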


\begin{proof} Since the $x$-invariant subspaces of $V$ form a chain, we have $\ker(R)\cap V=(0)$ if and only if $v_{n-1}\notin\ker(R)$,
which is equivalent to $d_{i}+d_{i+1}=n+1$ for some $i$, by \cite[Proposition 2.2]{CPS}.
\end{proof}

\begin{lemma}\label{duy} Given an integer $\ell>2$, a sequence of positive integers $(d_1,\dots,d_\ell)$,
and a scalar $\al\in F$,
let $R:\g\to\gl(d)$ be a standard (resp. normalized standard) representation relative to them.
Then the dual representation is similar to a representation $T:\g\to\gl(d)$ that is standard
(resp. normalized standard) relative to $(\ell, (d_\ell,\dots,d_1),(\ell-1)\la-\al)$.
Moreover, $R$ is faithful (resp. relatively faithful) if and only so is $T$.
\end{lemma}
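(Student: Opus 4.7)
The approach is to exhibit an explicit similarity transforming the contragredient representation $R^*(y)=-R(y)^t$ into a standard representation $T$. Let $J_p$ denote the $p\times p$ anti-diagonal identity and $D_p:=\operatorname{diag}(1,-1,\dots,(-1)^{p-1})$. I would take $S$ to be the block anti-diagonal $d\times d$ matrix whose $(i,\ell+1-i)$ block equals $D_{d_{\ell+1-i}}J_{d_{\ell+1-i}}$, and set $T(y):=SR^*(y)S^{-1}$. Intuitively, $S$ reverses the order of the $\ell$ blocks, within each block reverses the basis order (so that lower triangular becomes upper triangular after transposition), and finally twists by alternating signs to cancel the $-1$'s introduced by the combination of transpose and overall minus sign.

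The key identity $D_pJ_p(-J^p(\beta)^t)J_pD_p^{-1}=J^p(-\beta)$ shows that the $i$-th diagonal block of $T(x)$ becomes $J^{d_{\ell+1-i}}((\ell-i)\la-\al)=J^{d_i'}(\al'-(i-1)\la)$, where $\al':=(\ell-1)\la-\al$ and $d_i':=d_{\ell+1-i}$. Since $R^*(v_0)$ has blocks $-M_i^t$ on the first subdiagonal, conjugation by $S$ moves these to the first superdiagonal of $T(v_0)$, and a direct calculation shows that the $(d_i',1)$-entry of the $(i,i+1)$-block of $T(v_0)$ equals, up to a $\pm 1$ factor, $(M_{\ell-i})_{d_{\ell-i},1}$, which is non-zero by the standardness of $R$. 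The identity $T(v_k)=(\ad T(x)-\la 1)^k T(v_0)$, coupled with the fact that $\ad T(x)$ preserves the 1-diagonal block pattern, ensures every $T(v_k)$ is 1-diagonal block. Hence $T$ is standard relative to $(\ell,(d_\ell,\dots,d_1),\al')$.

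For the normalized version, $T$ may acquire sign discrepancies or stray entries violating the normalization axioms, but these can always be eliminated by a further conjugation by a block-diagonal matrix commuting with $T(x)$, as in \cite[Lemma 2.5]{CPS}. Thus when $R$ is normalized standard, $R^*$ is similar to a normalized standard representation $T$.

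Finally, since $R^*(y)=-R(y)^t$ vanishes exactly when $R(y)$ does, and similarity preserves kernels, $\ker T=\ker R^*=\ker R$. It follows that $T$ is faithful if and only if $R$ is, and since $\ker T\cap V=\ker R\cap V$ and $\ker T\cap\Lambda^2(V)=\ker R\cap\Lambda^2(V)$, relative faithfulness is preserved as well. The main obstacle will be the sign bookkeeping in the explicit definition of $S$: verifying that, after the triple action of reversal, transposition, and sign twist, the diagonal blocks of $T(x)$ acquire $+1$'s on the superdiagonal and the first-superdiagonal blocks of $T(v_0)$ retain non-zero bottom-left entries. Once $S$ is correctly fixed, the remaining verifications reduce to careful matrix computation.
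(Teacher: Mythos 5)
Your proposal is correct and supplies exactly the ``straightforward'' verification that the paper omits (its entire proof of Lemma \ref{duy} is the sentence ``This is straightforward''): the similarity $S$ built from block reversal together with the matrices $J_p$ and $D_p$, the key identity $D_pJ_p\bigl(-J^p(\beta)^t\bigr)J_pD_p^{-1}=J^p(-\beta)$, the resulting parameters $(\ell,(d_\ell,\dots,d_1),(\ell-1)\la-\al)$, the renormalization via a block-diagonal conjugation commuting with $T(x)$ as in \cite[Lemma 2.5]{CPS}, and the kernel equality $\ker T=\ker R^*=\ker R$ all check out. No gaps; this is the intended argument.
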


\begin{proof} This is straightforward.
\end{proof}


\begin{prop}\label{fieln-2}  Given an integer $n\geq 2$, let $(p_1,\dots,p_{n-1}),(q_1,\dots,q_{n-1})\in F^{n-1}$
be such that $p_j+q_j\ne0$ for all $j$, and
let $z,w\in F$ be non-zero.
Associated to these data, we consider matrices
$$
P_0,\dots,P_{n-1}\in M_{n-1\times 2},\qquad Q_0,\dots,Q_{n-1}\in M_{2\times n-1},
$$
having the following structure:
\[\small
 \begin{array}{lll}
P_0=\left(
      \begin{array}{cc}
        * & * \\
        \vdots & \vdots \\
        * & * \\
        z & * \\
      \end{array}
    \right),  &
    P_1=\left(
      \begin{array}{cc}
        * & * \\
        \vdots & \vdots \\
        * & * \\
        z & * \\
        0 & -p_1z \\
      \end{array}
    \right), &
P_2=\left(
      \begin{array}{cc}
        * & * \\
        \vdots & \vdots \\
        * & * \\
        z & * \\
        0 & -p_2z \\
        0 & 0 \\
      \end{array}
    \right), \\
P_3=\left(
      \begin{array}{cc}
        * & * \\
        \vdots & \vdots \\
        * & * \\
        z & * \\
        0 & -p_3z \\
        0 & 0 \\
        0 & 0 \\
      \end{array}
    \right),\dots, &
P_{n-2}=\left(
      \begin{array}{cc}
        z & * \\
        0 & -p_{n-2}z\\
        0 & 0 \\
      \vdots & \vdots \\
        0 & 0 \\
      \end{array}
    \right), &
    P_{n-1}=\left(
      \begin{array}{cc}
        0 & -p_{n-1}z\\
        0 & 0 \\
      \vdots & \vdots \\
        0 & 0 \\
      \end{array}
    \right),
 \end{array}
\]
\medskip
\[\small
 \begin{array}{ll}
Q_{0}= \left(
             \begin{array}{cccc}
               * & * & \dots & * \\
               w & * & \dots & * \\
             \end{array}
           \right), &
           Q_{1}= \left(
             \begin{array}{ccccc}
               q_1w & * & * & \dots & * \\
               0 & -w & * & \dots & * \\
             \end{array}
           \right), \\[5mm]
Q_2= \left(
             \begin{array}{cccccc}
               0 & -q_2w & * & *& \dots & * \\
               0 & 0 & w & * &\dots & * \\
             \end{array}
           \right), &
            Q_{3}= \left(
             \begin{array}{cccccc}
               0 & 0 & q_3w & * & \dots & * \\
               0 & 0 & 0 & -w & \dots & * \\
             \end{array}
           \right),\dots, \\[5mm]
Q_{n-2}= \left(
             \begin{array}{cccc}
               0\;  \dots\; 0 & (-1)^{n-3}q_{n-2}w & * \\
               0\;  \dots\; 0 & 0 & (-1)^{n-2}w \\
             \end{array}
           \right), &
Q_{n-1}=\left(
             \begin{array}{cc}
               0\;  \dots\;  0 & (-1)^{n-2}q_{n-1}w \\
               0\;  \dots\;  0 &  0 \\
             \end{array}
           \right).        \\[5mm]
 \end{array}
\]
Then the matrices $T_{i,j}\in M_{n-1}$, $0\leq i<j\leq n-1$, defined by
$$
T_{i,j}=P_iQ_j-P_jQ_i, \quad 0\leq i<j\leq n-1,
$$
are linearly independent.
\end{prop}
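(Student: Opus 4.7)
The plan is to suppose $\sum_{0\leq i<j\leq n-1} c_{i,j}T_{i,j}=0$ and prove $c_{i,j}=0$ for every pair by induction on the lexicographic order in which $(i,j)\prec(i',j')$ iff $j<j'$, or $j=j'$ and $i<i'$. For each pair $(i,j)$ I would extract $c_{i,j}$ by examining the matrix equation at two specific positions: $(n-j,\,i+1)$ and $(n-1-i,\,j)$, which coincide precisely when $j=i+1$.

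The first step is a direct computation of the $(n-j,i+1)$- and $(n-1-i,j)$-entries of $T_{i,j}=P_iQ_j-P_jQ_i$, using only the forced entries $(P_k)_{n-1-k,1}=z$, $(P_k)_{n-k,2}=-p_kz$, $(Q_k)_{1,k}=(-1)^{k-1}q_kw$, $(Q_k)_{2,k+1}=(-1)^kw$, together with the zero blocks prescribed by the displayed shapes. For $j>i+1$ one obtains the determined values $(T_{i,j})_{n-j,i+1}=(-1)^ip_jzw$ (arising from the $-p_jz$ of $P_j$ paired with the $(-1)^iw$ of $Q_i$, the other three products being $0$) and $(T_{i,j})_{n-1-i,j}=(-1)^{j-1}zq_jw$ (arising from the $z$ of $P_i$ paired with the $(-1)^{j-1}q_jw$ of $Q_j$, the others being $0$); in the adjacent case $j=i+1$ both positions collapse to a single entry equal to $(-1)^i(p_{i+1}+q_{i+1})zw$.

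The technical heart of the argument, and the step I expect to be the main obstacle, is to show that for every $(i',j')\succ(i,j)$ both entries $(T_{i',j'})_{n-j,i+1}$ and $(T_{i',j'})_{n-1-i,j}$ are determined to vanish, i.e.\ involve no asterisk. When $j'>j$, columns $i+1$ and $j$ of $Q_{j'}$ lie strictly to the left of its nonzero support (since $i+1\le j<j'$), which kills the $P_{i'}Q_{j'}$ contribution, while rows $n-j$ and $n-1-i$ both lie strictly below the $z$-entry and the $-p_{j'}z$-entry of $P_{j'}$ (because $j'\ge j+1\ge i+2$ yields $n-1-j'<n-j$ and $n-j'<n-j$, and similarly $n-j'\le n-i-2<n-1-i$), which kills the $P_{j'}Q_{i'}$ contribution. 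When $j'=j$ and $i'>i$ the analogous vanishings use that row $n-1-i$ sits strictly below the $z$ of $P_{i'}$ and that $(Q_{i'})_{2,i+1}=0$, together with the parallel facts about $P_jQ_{i'}$. This bookkeeping, done uniformly across the four possible product terms, is the bulk of the work.

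Granted these facts, the $(n-j,i+1)$- and $(n-1-i,j)$-entries of $\sum c_{i',j'}T_{i',j'}=0$ reduce, after invoking the inductive hypothesis $c_{i',j'}=0$ for $(i',j')\prec(i,j)$ (which annihilates all earlier contributions, asterisks included), to the two scalar equations $c_{i,j}\cdot(-1)^ip_jzw=0$ and $c_{i,j}\cdot(-1)^{j-1}zq_jw=0$, and to the single equation $c_{i,i+1}\cdot(-1)^i(p_{i+1}+q_{i+1})zw=0$ in the adjacent case. Since $z,w\ne 0$ and $p_j+q_j\ne 0$ forces at least one of $p_j,q_j$ to be nonzero, this yields $c_{i,j}=0$ in every case, which closes the induction and establishes the linear independence; the base case $(0,1)$ is absorbed into this general step, because there $(T_{i',j'})_{n-1,1}$ is already determined to be $0$ for every $(i',j')\ne(0,1)$.
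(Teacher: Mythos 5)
Your proof is correct, and it takes a genuinely different route from the paper's. The paper argues by an outer induction on $n$: it first clears the coefficients $\alpha_{0,j}$ via an inner induction on $k=i+j$, computing the entries of the sum along the anti-diagonal $i-j=n-1-k$, which produces a zig-zag chain of two-term relations among the unknowns $\alpha_{i,k-i}$; the hypothesis $p_j+q_j\neq 0$ kills the middle coefficient (the entry $\pm\alpha_{\frac{k-1}{2},\frac{k+1}{2}}(p_{\frac{k+1}{2}}+q_{\frac{k+1}{2}})zw$ for odd $k$, or the two entries with coefficients $p_{\frac{k}{2}+1}$ and $q_{\frac{k}{2}+1}$ for even $k$) and then propagates outward, each pair of adjacent unknowns being tied by two relations with coefficients $(q_j,p_{k+1-j})$ and $(p_j,q_{k+1-j})$. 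Having cleared the first row, the paper truncates --- deleting the last rows of $P_1,\dots,P_{n-1}$ and the first columns of $Q_1,\dots,Q_{n-1}$ --- and invokes the inductive hypothesis for $n-1$ to handle all $\alpha_{i,j}$ with $i\geq 1$. You replace this double induction by a single well-founded induction on the lexicographic order by $(j,i)$, probing the two entries $(n-j,i+1)$ and $(n-1-i,j)$; these are in fact exactly the entries tabulated in the paper (for $(0,k)$ they are $(n-k,1)$ and $(n-1,k)$), but your ordering guarantees that each probe contains a single not-yet-eliminated coefficient --- e.g.\ the contribution of $T_{1,k-1}$ at $(n-1,k)$, which the paper must carry as a second unknown, is already dead for you since $(1,k-1)\prec(0,k)$ --- so no chain-solving and no truncation step is needed. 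Your two probes per pair play the same role as the paper's two relations per adjacent pair of unknowns in making $p_j+q_j\neq 0$ suffice, via $c_{i,j}p_j=0=c_{i,j}q_j$ when $j>i+1$ and the single entry $(-1)^i(p_{i+1}+q_{i+1})zw$ when $j=i+1$. I verified your support bookkeeping: for $j'>j$ the columns $i+1$ and $j$ of $Q_{j'}$ and the rows $n-j$ and $n-1-i$ of $P_{j'}$ are forced to vanish (the inequalities $j'\geq j+1\geq i+2$ are precisely what is needed), and similarly for $j'=j$, $i'>i$, so no asterisk ever enters with a surviving coefficient; your entry values $(T_{i,j})_{n-j,i+1}=(-1)^ip_jzw$ and $(T_{i,j})_{n-1-i,j}=(-1)^{j-1}q_jzw$ also check out against the paper's table. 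Your version buys uniformity (one induction, no need for the observation that the matrix structure is stable under deletion of the last rows and first columns), at the price of a heavier four-term vanishing case analysis; the paper's anti-diagonal grading buys shorter per-step computations but needs the extra reduction argument to reach the coefficients with $i\geq 1$.
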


\begin{proof} By induction on $n$. In the base case $n=2$, we have
$$
P_0=\begin{pmatrix}
     z & *
    \end{pmatrix}, \;
    P_1=\begin{pmatrix}
     0 & -p_1z
    \end{pmatrix},\;
    Q_0=\left(
                   \begin{array}{c}
                     * \\
                     w \\
                   \end{array}
                 \right), Q_1=\left(
                   \begin{array}{c}
                     q_1w \\
                     0 \\
                   \end{array}
                 \right).
$$
Therefore
$$
T_{0,1}=\begin{pmatrix}(p_1+q_1)wz\end{pmatrix}\neq 0.
$$
Assume that $n>2$ and the that result is true for $m=n-1$. Let
\begin{equation*}
\mathcal{T}=\underset{0\leq i<j\leq n-1}\sum\al_{i,j} T_{i,j}
\end{equation*}
and assume $\mathcal{T}=0$. We wish to show that
\begin{equation}
\label{alcero-2}
\al_{i,j}=0,\quad 0\leq i<j\leq n-1.
\end{equation}
It suffices to show that
\begin{equation}
\label{alcero}
\al_{0,j}=0,\quad 1\leq j\leq n-1.
\end{equation}
Indeed, assume we have proven (\ref{alcero}). Since $\mathcal{T}=0$, we obtain
\begin{equation}
\label{alcero2}
\underset{1\leq i<j\leq n-1}\sum\al_{i,j} T_{i,j}=0.
\end{equation}
Let $P'_0,\dots,P'_{m-1}\in M_{m-1\times 2}$ and $Q'_0,\dots,Q'_{m-1}\in M_{2\times m-1}$
be the matrices
obtained by deleting the last rows of $P_1,\dots,P_{n-1}$ and the first columns of $Q_1,\dots,Q_{n-1}$,
and let $T'_{i,j}=P'_iQ'_j-P'_jQ'_i$, $0\leq i<j\leq m-1$.
It follows automatically from (\ref{alcero2}) that
\begin{equation*}
\underset{0\leq i<j\leq m-1}\sum\al'_{i,j} T'_{i,j}=0,
\end{equation*}
where $\al'_{i,j}=\al_{i+1,j+1}$ and, from the inductive hypothesis, we conclude
\begin{equation}
\label{alcero4}
\al_{i,j}=0,\quad 1\leq i<j\leq n-1.
\end{equation}
We may now obtain (\ref{alcero-2}) from (\ref{alcero}) and (\ref{alcero4}).

We proceed to prove (\ref{alcero}). In fact we will prove by induction on $k\le n-1$ that $\alpha_{i,j}=0$ whenever $i<j$ and $i+j\leq k$.

The base case $k=1$ is straightforward. Indeed, from $\mathcal{T}_{n-1,1}=\alpha_{0,1}(p_1+q_1)wz$, infer $\alpha_{0,1}=0$.

Suppose $1<k\leq n-1$ and assume that $\alpha_{i,j}=0$ whenever $i<j$ and $i+j\leq k-1$. Using this, a direct computation reveals that, for $i-j=n-1-k$, we have
\[
 \mathcal{T}_{i,j}=\begin{cases}
(-1)^j(\alpha_{j,k-j}\,q_{j}-\alpha_{j-1,k+1-j}\,p_{k+1-j})\,wz, & \text{if  $1\le j<\frac{k}2$;} \\[2mm]
-(-1)^{\frac{k}{2}}\,\alpha_{\frac{k}{2}-1,\frac{k}{2}+1}\,p_{\frac{k}{2}+1}\,wz, & \text{if $j=\frac{k}2$;} \\[2mm]
-(-1)^{\frac{k+1}{2}}\,\alpha_{\frac{k-1}{2},\frac{k+1}{2}}\big(q_{\frac{k+1}{2}}+p_{\frac{k+1}{2}}\big)\,wz, & \text{if $j=\frac{k+1}2$;} \\[2mm]
-(-1)^{\frac{k+2}{2}}\,\alpha_{\frac{k}{2}-1,\frac{k}{2}+1}\,q_{\frac{k}{2}+1}\,wz, & \text{if $j=\frac{k}2+1$;} \\[2mm]
-(-1)^j(\alpha_{k-j,j}\,q_{j}-\alpha_{k+1-j,j-1}\,p_{k+1-j})\,wz, & \text{if  $\frac{k}2+1<j\le n-1$;}
                   \end{cases}
\]
that is
\[
 \begin{array}{cc|cl}
  i&j & \mathcal{T}_{i,j}/wz \\
  \hline & \\[-2mm]
  n-k & 1  & -\alpha_{1,k-1}q_{1}+\alpha_{0,k}p_{k} \\[2mm]
  n-k+1 & 2  & \alpha_{2,k-2}q_{2}-\alpha_{1,k-1}p_{k-1} \\[2mm]
  n-k+2 & 3  & -\alpha_{3,k-3}q_{3}+\alpha_{2,k-2}p_{k-2} \\[1mm]
  \vdots &  \vdots &  \vdots  \\[1mm]
  n-1-\frac{k}{2} & \frac{k}{2}  & -(-1)^{\frac{k}{2}}\,\alpha_{\frac{k}{2}-1,\frac{k}{2}+1}\,p_{\frac{k}{2}+1} &\text{(if $k$ is even)}\\[2mm]
  n-1-\frac{k-1}{2} & \frac{k+1}{2}  & -(-1)^{\frac{k+1}{2}}\,\alpha_{\frac{k-1}{2},\frac{k+1}{2}}\big(q_{\frac{k+1}{2}}+p_{\frac{k+1}{2}}\big)&\text{(if $k$ is odd)} \\[2mm]
 n-1-\frac{k-2}{2} &  \frac{k+2}{2}  &   -(-1)^{\frac{k+2}{2}}\,\alpha_{\frac{k}{2}-1,\frac{k}{2}+1}\,q_{\frac{k}{2}+1} & \text{(if $k$ is even)}  \\[1mm]
   \vdots &  \vdots &  \vdots  \\[1mm]
  n-3 & k-2  & -(-1)^{k-2}(\alpha_{2,k-2}\,q_{k-2}-\alpha_{3,k-3}\,p_{3}) \\[2mm]
  n-2 & k-1  & -(-1)^{k-1}(\alpha_{1,k-1}\,q_{k-1}-\alpha_{2,k-2}\,p_{2}) \\[2mm]
  n-1 & k  & -(-1)^k(\alpha_{0,k}\,q_{k}-\alpha_{1,k-1}\,p_{1}) \\
  \end{array}
\]
Since, by hypothesis, $p_j+q_j\ne0$ for all $j$
(which in turns implies that either $p_j$ or $q_j$ is non-zero for all $j$) we obtain that  (\ref{alcero}) holds.
\end{proof}

\begin{theorem}\label{main0} A representation $R_{a,b,c,M,N,\al}$ of $\g$ is faithful if and only if
\begin{equation}
\label{conabc}
(a,b,c)\in\{(n,1,n),(n-1,2,n-1),(n,1,n-1),(n-1,1,n)\}.
\end{equation}
\end{theorem}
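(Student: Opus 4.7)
The plan is to reduce faithfulness to a concrete linear-independence problem for certain matrices and then to treat the sufficient and necessary directions separately. Since $R(x)$ is block diagonal with nonzero diagonal part while $R(L(V))$ is block strictly upper triangular, any element of $\ker R$ lies in $L(V)$. Proposition \ref{normal2} yields $\ker R\cap V=0$ once one checks directly that none of the four listed triples is of extreme type. Writing $R(v_k)$ in block form, its only nonzero blocks are in positions $(1,2)$ and $(2,3)$, namely $M_k=\nabla_1^kM$ and $N_k=\nabla_2^kN$, where $\nabla_1(X)=J^a(0)X-XJ^b(0)$ and $\nabla_2(Y)=J^b(0)Y-YJ^c(0)$. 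Hence $R(v_i\wedge v_j)=[R(v_i),R(v_j)]$ lives in position $(1,3)$ with value
\[
T_{i,j}=M_iN_j-M_jN_i,
\]
and $R$ is faithful if and only if the $\binom{n}{2}$ matrices $\{T_{i,j}:0\le i<j\le n-1\}$ are linearly independent in $M_{a\times c}$.

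For sufficiency I would handle $(n,1,n)$, $(n,1,n-1)$, and $(n-1,2,n-1)$ directly, and obtain $(n-1,1,n)$ by duality via Lemma \ref{duy}. When $b=1$ the normalization pins down $M=e_a$ and $N=e_1^T$, giving $M_k=e_{a-k}$ for $k\le a-1$ and $N_k=(-1)^k e_{k+1}^T$ for $k\le c-1$ (and $0$ beyond). For $(n,1,n)$ each $T_{i,j}$ has the two-element support $\{(n-i,j+1),(n-j,i+1)\}$, and an index check shows these supports are pairwise disjoint across pairs $(i,j)$, forcing linear independence. For $(n,1,n-1)$ the same argument covers the pairs with $j\le n-2$, while the pairs $(i,n-1)$ contribute singleton supports $\{(1,i+1)\}$ in row $1$ that are disjoint from the rest. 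The central case is $(n-1,2,n-1)$: one expands
\[
\nabla_1^k(M)=\sum_{j=0}^{k}\binom{k}{j}(-1)^j J^a(0)^{k-j}MJ^b(0)^j,
\]
and analogously for $\nabla_2^kN$, and verifies that the resulting $M_k,N_k$ fit the structural template of the matrices $P_k,Q_k$ in Proposition \ref{fieln-2}, with $z=M_{a,1}$, $w=N_{b,1}$, and $p_j=q_j=j$. Since $p_j+q_j=2j\ne0$ in characteristic zero, Proposition \ref{fieln-2} yields the required linear independence.

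For necessity I would show that $T_{n-2,n-1}=0$ whenever $(a,b,c)$ satisfies (\ref{cond}) but is not on the list; the dual half $b+c=n+1$ reduces to $a+b=n+1$ via Lemma \ref{duy}. In every such non-faithful triple one has $c\le n-2$. A direct entrywise expansion shows $(M_k)_{i,1}=1$ if $i+k=a$ and $0$ otherwise, while $M_{n-1}$ is supported only at position $(1,b)$. Dually, $(N_{n-2})_{b,l}=0$ for every $l\le c\le n-2$, and $N_{n-1}$ is zero unless $b+c=n+1$, in which case it is supported only at $(1,c)$. A brief case split on whether $b\in\{1,2\}$ or $b\ge3$ then yields $M_{n-1}N_{n-2}=0$ and $M_{n-2}N_{n-1}=0$; in the $b\ge3$ subcase one uses that the first column of $M_{n-2}$ is zero because $i+n-2=a\le n-2$ has no solution in $\{1,\dots,a\}$. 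Hence $T_{n-2,n-1}=0$, so $R$ is not faithful.

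The main obstacle is sufficiency in the $(n-1,2,n-1)$ case: identifying the entries of $\nabla_1^kM$ and $\nabla_2^kN$ with enough precision to match the template of Proposition \ref{fieln-2} and to pin down the parameters $p_j,q_j$ requires a careful binomial expansion. Once the template is matched, Proposition \ref{fieln-2} does the rest, and all the other pieces are routine support and nilpotency arguments in the $T_{i,j}$-formulation.
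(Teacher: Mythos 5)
Your proposal is correct, and its sufficiency half is essentially the paper's own proof: the same reduction of faithfulness to linear independence of the $(1,3)$-blocks $T_{i,j}=M_iN_j-M_jN_i$ (using relative faithfulness via Proposition \ref{normal2} and the block structure), the same three cases $(n,1,n)$, $(n,1,n-1)$, $(n-1,2,n-1)$ with $(n-1,1,n)$ obtained by duality through Lemma \ref{duy}, the same disjoint-support/canonical-basis argument in the $b=1$ cases (the paper phrases it via $\mathfrak{so}(n)$ and $\mathfrak{so}(n-1)$ plus a complementary corner), and the same appeal to Proposition \ref{fieln-2} with $p_j=q_j=j$ for the hard case $(n-1,2,n-1)$. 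Where you genuinely diverge is necessity. The paper restricts the faithful $R$ to $\Lambda^2(V)$, observes via \cite[Lemma 3.1]{CPS} that this gives an $F[t]$-module monomorphism into the $(1,3)$-block space, and compares nilpotency degrees of $\ad x-2\la$: degree $2n-3$ on $\Lambda^2(V)$ (computed from the binomial expansion (\ref{forme})) versus $a+c-1$ on the block space, yielding the key bound $a+c\geq 2n-2$; this is combined with $b+c\geq n$ (itself obtained by showing $R(v_{n-2}\wedge v_{n-1})=0$ when $b+c<n$) and (\ref{cond}) to pin down the list. You instead show directly, by support computations with $\nabla_1^kM$ and $\nabla_2^kN$, that $T_{n-2,n-1}=0$ for \emph{every} triple satisfying (\ref{cond}) that is off the list, exhibiting the explicit kernel element $v_{n-2}\wedge v_{n-1}$; I checked your case split ($b\in\{1,2\}$ versus $b\geq 3$, using that $N_{n-1}\ne0$ forces $b+c=n+1$, hence $c=a$ and $b\ge3$ off the list, where the first column of $M_{n-2}$ and the $b$-th row of $N_{n-2}$ vanish) and it goes through. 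In effect you extend the one computation the paper uses for the $b+c\geq n$ step into a uniform argument that replaces the module-theoretic bound (\ref{ace}) entirely. The trade-off: the paper's $\sl(2)$-string comparison explains conceptually \emph{why} $a+c$ must be large (the string of length $2n-3$ in $\Lambda^2(V)$, with lowest vector $v_0\wedge v_1$ and top $v_{n-1}\wedge v_{n-2}$, must embed in the $(1,3)$-block module), while your route is more elementary and self-contained, at the cost of an enumeration of off-list shapes. Two small points to make explicit in a write-up: your necessity formulas such as $(M_k)_{i,1}=1$ iff $i+k=a$ presuppose the normalized form, so state that faithfulness is invariant under the normalizing conjugation (\cite[Lemma 2.5]{CPS}, as remarked after Definition \ref{defeabc}) — though in fact the vanishing statements you need ($J^a(0)^{n-2}=0$ for $a\le n-2$, and the $b$-th row of $N_k$ equal to $(-1)^kN_{b,\cdot}J^c(0)^k$) hold without normalization; and the matching of $\nabla_1^kM$, $\nabla_2^kN$ to the template of Proposition \ref{fieln-2} with $p_j=q_j=j$, which you defer to a binomial expansion, is asserted at the same level of detail as the paper's ``it is not difficult to see,'' so no gap there.
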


\begin{proof} We divide the proof into two parts.

{\sc Necessity.} Suppose the representation $R=R_{a,b,c,M,N,\al}:\g\to\gl(d)$ is faithful, where $d=a+b+c$.

Let $S$ be the subspace of $\gl(d)$ of all matrices
$$
\left(
               \begin{array}{ccc}
                 0 & 0 & P \\
                 0 & 0 & 0 \\
                 0 & 0 & 0 \\
               \end{array}
             \right),\quad P\in M_{a\times c}.
$$
Letting $A$ be as in (\ref{pet}), we view $S$ as an $F[t]$-module via $\ad_{\gl(d)}A-2\la 1_{\gl(d)}$.
As in \cite[Proposition 2.1]{CPS}, we see that $\ad_{\gl(d)}A-2\la 1_{\gl(d)}$ acts nilpotently on $S$ with nilpotency degree $a+c-1$.
On the other hand, we may view $\Lambda^2(V)$ as an $F[t]$-module via $\ad_\g x -2\la 1_\g$. Direct computation (alternatively, we may use the theory
of $\sl(2)$-modules), reveals that $\ad_\g x -2\la 1_\g$ acts on $\Lambda^2(V)$ with nilpotency degree $2n-3$. Indeed, we have
\begin{equation}
\label{forme}
(\ad_\g x -2\la 1_\g)^m(v\wedge w)=\underset{i+j=m}\sum {{m}\choose {i}} (x-\la 1_V)^i v\wedge (x-\la 1_V)^j w.
\end{equation}
Set $m=2n-3$ in (\ref{forme}) and take $v=v_p$ and $w=v_q$ with $0\leq p<q\leq n-1$. Then the right hand side of (\ref{forme}) is equal to 0
(including the extreme case $p=0,q=1$, which produces ${{2n-3}\choose {n-1}} v_{n-1}\wedge v_{n-1}=0$). Next set $m=2n-4$ in (\ref{forme}) and take $v=v_0$ and $w=v_1$. Then the right hand side of (\ref{forme}) is equal to
$$
\left [{{2n-4}\choose {n-1}}-{{2n-4}\choose {n-2}}\right ]v_{n-1}\wedge v_{n-2}\neq 0.
$$
Since $R$ is faithful, restricting $R$ to $\Lambda^2(V)$ yields a linear monomorphism $T:\Lambda^2(V)\to S$. It follows from \cite[Lemma 3.1]{CPS}
that $T$ commutes with the indicated actions of $F[t]$, so that $T$ is a monomorphism of $F[t]$-modules. It follows from above that
\begin{equation}
\label{ace}
2n-3\leq a+c-1.
\end{equation}

On the other hand, by (\ref{cond}), we have $a+b=n+1$ or $c+b=n+1$.
By duality (see Lemma \ref{duy}), we may
assume that $a+b=n+1$.
Suppose, if possible, that $b+c<n$. As the $x$-invariant subspaces of $V$ form a chain, it follows from \cite[Proposition 2.2]{CPS} that blocks (2,3) of $R(v_{n-1})$ and $R(v_{n-2})$ are equal to 0 (alternatively, appeal to a direct computation based on (\ref{pet}) and (\ref{pet2})).
Then (\ref{pet3}) yields $R(v_{n-2}\wedge v_{n-1})=0$, a contradiction. We infer $b+c\geq n$. It follows from (\ref{cond}) that $b+c=n$ or $b+c=n+1$. In the second case $c=a$, so (\ref{ace}) gives $a\geq n-1$, whence
$$
(a,b,c)\in\{(n,1,n),(n-1,2,n-1)\}.
$$
In the first case $c=a-1$, so (\ref{ace}) gives $a\geq n-\frac{1}{2}$, whence $(a,b,c)=(n,1,n-1)$.

{\sc Sufficiency.} We wish to show that $R=R_{a,b,c,M,N,\al}$ is faithful whenever (\ref{conabc}) holds.

By duality (see Lemma \ref{duy}), we may restrict to the cases
\begin{equation}
\label{set_abc}
(a,b,c)\in\{(n,1,n),(n-1,2,n-1),(n,1,n-1)\}.
\end{equation}

We will write $P(y),Q(y)$ and $T(y)$ for blocks $(1,2),(2,3)$ and (1,3) of $R(y)$, $y\in\g$, respectively.

By Proposition \ref{normal2}, $R$ is relatively faithful (it follows from \eqref{set_abc} that,
after normalizing $R$, we are not in the extreme case)
and thus $R$ is faithful if and only if the matrices $T(v_i\wedge v_j)$, $0\leq i<j\leq n-1$, are linearly independent.

$\bullet (a,b,c)=(n-1,2,n-1)$.
Set $(p_1,\dots,p_{n-1})=(q_1,\dots,q_{n-1})=(1,\dots,n-1)$ and,
for $i=0,\dots,n-1$, let $P_i=P(v_i)\in M_{n-1\times 2}$ and $Q_i=Q(v_i)\in M_{2\times n-1}$.
It is not difficult to see that these vectors and matrices satisfy the hypothesis of
Proposition \ref{fieln-2} and thus, considering \eqref{pet3}, we obtain that
$$T(v_i\wedge v_j)=P(v_i)Q(v_j)-P(v_j)Q(v_i)=P_iQ_j-P_jQ_i=T_{i,j}, \quad 0\leq i<j\leq n-1,$$
are linearly independent.

$\bullet $ $(a,b,c)=(n,1,n)$.  Note that $T(v_i\wedge v_j)$, $0\leq i<j\leq n-1$, form the canonical
basis of the space $\mathfrak{so}(n)$ of all $n\times n$ skew-symmetric matrices.

$\bullet $ $(a,b,c)=(n,1,n-1)$.  Again, $T(v_i\wedge v_j)$, $0\leq i<j\leq n-2$, form the canonical
basis of $\mathfrak{so}(n-1)$, viewed as the subspace of $\mathfrak{so}(n)$ of matrices with zero first row and last column. On the other hand,
noting that $Q(v_{n-1})=0$, we see that $T(v_i\wedge v_{n-1})$, $0\leq i<n-1$, form the (opposite of the) canonical basis of $F^{n-1}$,
viewed as top left corner, say $C$, of $M_n$. Since $\mathfrak{so}(n-1)\cap C=(0)$, the result follows.
\end{proof}


\begin{exa}\label{sl2lindo}{\rm An interesting example occurs when $n=3$ and $a=b=c=2$. Then we do get
a faithful module above of a very special nature: it is the
only faithful uniserial module of $\g$ where all the blocks are squares.
Take $\la=\al=0$ (the other cases are easy modifications).

Given a Lie algebra $L$ and an associative commutative algebra $A$, we know that $L\otimes A$ is  Lie algebra under
$[x\otimes a,y\otimes b]=[x,y]\otimes ab$. Moreover, if $R_1:L\to\gl(V_1)$ and $R_2:A\to\gl(V_2)$ are representations,
then $R_1\otimes R_2:L\otimes A\to\gl(V\otimes A)$ is a representation.

Now take $L=\sl(2)$, with standard basis $E,H,F$, and $A=F[t]/(t^3)$.
Let $R_1$ be the irreducible representation of highest weight 1 and
let $R_2$ be the regular representation.
If we restrict the representation $R_1\otimes R_2$ to the subalgebra of $\sl(2)\otimes F[t]/(t^3)$
generated by $\{E\otimes 1, F\otimes t\}$ (which is isomorphic to $\g$)
we obtain the case $n=3$ and $a=b=c=2$ of the above construction.
}
\end{exa}

\section{Faithfulness in purely matrix terms}\label{secnew}

The following general version of Theorem \ref{main0} is stated in purely matrix terms. Given integers $a,b\geq  1$, let $\Phi_{a,b}:M_{a\times b}\to M_{a\times b}$ be the nilpotent linear operator defined by
$$
\Phi_{a,b}(X)=J^a(0)X-XJ^b(0).
$$
We will write $\Phi$ instead of $\Phi_{a,b}$ when no confusion is possible.

\begin{theorem}\label{lidep}
Given a triple $(a,b,c)$ of positive integers and
a pair $(P,Q)$ of matrices such that $P\in M_{a\times b}$, $Q\in M_{b\times c}$, we
define the matrices $P_i$, $Q_i$, $T_{i,j}$ by
$$
P_i=\Phi^i(P),\; Q_i=\Phi^i(Q),\quad i\geq 0,
$$
$$
T_{i,j}=P_iQ_j-P_jQ_i,\quad 0\leq i<j,
$$
and set
$$
n=\max\{a+b-1,b+c-1\}.
$$
Then $P_i=Q_i=0$ for $i\ge n$ and the set  $\mathcal{T}=\{T_{i,j}:0\leq i<j\leq n-1\}$ is linearly independent if and only if
exactly one of the following three conditions hold:
$$
P_{a,1}\neq 0, Q_{b,1}\neq 0\text{ and }(a,b,c)\in\{(n,1,n),(n-1,2,n-1),(n,1,n-1),(n-1,1,n)\},
$$
$$
P_{a,1}=0, P_{a-1,1}\neq 0, Q_{b,1}\neq 0\text{ and } (a,b,c)=(n,1,n),
$$
$$
P_{a,1}\neq 0, Q_{b,1}=0, Q_{b,2}\neq 0\text{ and } (a,b,c)=(n,1,n).
$$
\end{theorem}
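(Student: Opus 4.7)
The strategy is to view $T_{i,j}$ as the $(1,3)$-block of $R(v_i\wedge v_j)$ for the representation $R=R_{a,b,c,P,Q,\alpha}$ of $\g$ from Definition \ref{defeabc}, whenever that definition applies. This reduces the non-degenerate case directly to Theorem \ref{main0}, and the two degenerate cases will be reduced to a smaller instance of the non-degenerate case. As a preliminary, I would observe that $\Phi_{a,b}$ is nilpotent of degree $a+b-1$ on $M_{a\times b}$ (a standard fact; cf.\ \cite[Proposition~2.1]{CPS}); consequently $P_i=0$ for $i\ge a+b-1$, $Q_i=0$ for $i\ge b+c-1$, and hence $P_i=Q_i=T_{i,j}=0$ for $i\ge n$ or $j\ge n$, confirming that $\mathcal{T}$ is the maximal candidate set.

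For the first case ($P_{a,1}\ne 0$, $Q_{b,1}\ne 0$), the formula $n=\max\{a+b-1,b+c-1\}$ forces condition (\ref{cond}) of Definition \ref{defeabc}: whichever side attains the maximum gives equality with $n+1$, and the other, being $\le n+1$, automatically satisfies the accompanying inequality. Thus $R=R_{a,b,c,P,Q,\alpha}$ (for any $\alpha\in F$) is a relatively faithful uniserial representation of $\g$ by Propositions \ref{normali} and \ref{normal2}. A direct block computation shows that $T_{i,j}$ is exactly block $(1,3)$ of $R(v_i\wedge v_j)$, and all other blocks of $R|_{\Lambda^2(V)}$ vanish (since $R(v_k)$ is supported on blocks $(1,2),(2,3)$ only). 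Therefore $R$ is faithful iff $\mathcal{T}$ is linearly independent, and Theorem \ref{main0} identifies the four admissible triples.

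For case 2 ($P_{a,1}=0$, $P_{a-1,1}\ne 0$, $Q_{b,1}\ne 0$), I would first establish necessity of $(a,b,c)=(n,1,n)$. The degeneracy drops the $\Phi$-nilpotency of $P$ to $a+b-2$, so $P_{n-1}=0$ and $T_{n-2,n-1}=P_{n-2}Q_{n-1}$. Independence demands $T_{n-2,n-1}\ne 0$, hence $P_{n-2}\ne 0$ and $Q_{n-1}\ne 0$, forcing $a+b=n+1$ and $b+c=n+1$, so $a=c$. To exclude $b\ge 2$, observe that the terminal iterate $P_{n-2}=\Phi^{a+b-3}(P)$ is supported at the top-right corner $(1,b)$ of $M_{a\times b}$, while $Q_{n-1}=\Phi^{b+c-2}(Q)$ is supported at $(1,c)$ of $M_{b\times c}$; for $P_{n-2}Q_{n-1}$ to be non-zero one would need to match column index $b$ of $P_{n-2}$ with row index $1$ of $Q_{n-1}$, which fails for $b\ge 2$, giving $P_{n-2}Q_{n-1}=0$, a contradiction. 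Hence $b=1$ and $(a,b,c)=(n,1,n)$. For sufficiency, since $P_{n,1}=0$ kills the bottom row of every $T_{i,j}$, I would split $T_{i,j}$ into its top-left $(n-1)\times(n-1)$ block $\widehat T_{i,j}$ and its $n$-th column. For $0\le i<j\le n-2$, $\widehat T_{i,j}$ coincides with the $T$-matrix for case 1 applied to the triple $(n-1,1,n-1)$ and the truncations $\widehat P=(P_1,\ldots,P_{n-1})^T$, $\widehat Q=(Q_1,\ldots,Q_{n-1})$ (which satisfy the case 1 hypotheses), so these are independent by case 1. The $n-1$ matrices $T_{i,n-1}$ have their only non-zero entries in column $n$, and those columns form a staircase with leading pivot $P_{n-1,1}\ne 0$, hence are also independent. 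Projecting a putative linear dependence first to the top-left block (killing the $j\le n-2$ coefficients) and then to column $n$ (killing the $j=n-1$ coefficients) completes the proof. Case 3 is handled by the parallel argument with the roles of $P,Q$ (and $a,c$) interchanged.

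The main obstacle, as I anticipate, is the necessity step in case 2 that rules out $b\ge 2$. It rests on an explicit description of the terminal iterates $P_{n-2}$ and $Q_{n-1}$ on the degenerate and non-degenerate inputs respectively, showing that both concentrate at the top-right corner of their matrix space; the incompatibility of these supports under matrix multiplication when $b\ge 2$ is what ultimately forces $b=1$. This structural analysis refines the combinatorics already present in Proposition \ref{fieln-2}.
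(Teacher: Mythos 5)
Your proposal is correct and follows essentially the same route as the paper's own proof: your Case 1 is precisely the paper's appeal to Theorem \ref{main0} (whose proof already contains your dictionary identifying $T_{i,j}$ with block $(1,3)$ of $R(v_i\wedge v_j)$), and your Case 2 necessity --- $P_{n-1}=0$, so $T_{n-2,n-1}=P_{n-2}Q_{n-1}\neq 0$ forces $a+b=b+c=n+1$, while the concentration of $P_{n-2}$ at position $(1,b)$ and of $Q_{n-1}$ at position $(1,c)$ rules out $b\geq 2$ --- is exactly the paper's argument, if anything slightly streamlined, since the paper additionally routes through the necessity part of Theorem \ref{main0} before making the same corner-support computation. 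The one real divergence is in Case 2 sufficiency: the paper deletes only the last row of $P$ and invokes Case 1 once for the triple $(n-1,1,n)$ (for which the relevant maximum is still $n$, so the full index range $0\leq i<j\leq n-1$ is covered in one stroke), whereas you truncate to $(n-1,1,n-1)$ and must then handle the matrices $T_{i,n-1}$ by a separate staircase argument in column $n$; both reductions are valid, the paper's being one step shorter. Two small pieces of bookkeeping are absent from your write-up, though each follows instantly from observations you already make: (i) the sub-case $P_{a,1}=0=Q_{b,1}$ must be disposed of to complete the ``only if'' direction --- there the dropped nilpotency degrees give $P_{n-1}=Q_{n-1}=0$, hence every $T_{i,n-1}=0$ and $\mathcal{T}$ is dependent, exactly as the paper notes at the outset; and (ii) in Case 2 the condition $P_{a-1,1}\neq 0$ should be derived rather than taken as hypothesis --- it is forced by your own step $P_{n-2}\neq 0$, since once $b=1$ and $a=n$ the only possibly non-zero entry of $P_{n-2}$ is $P_{n-1,1}$.
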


\begin{proof} The case $n=1$ is obvious, so we assume $n>1$.

It follows from \cite[Proposition 2.2]{CPS} that $P_{i}=Q_{i}=0$ for $i\ge n$.
If $P_{a,1}=0$ and $Q_{b,1}=0$ then \cite[Proposition 2.1]{CPS} implies $P_{n-1}=Q_{n-1}=0$
and thus $\mathcal{T}$ is linearly dependent.

For the remainder of the proof we assume that $P_{a,1}\neq 0$ or $Q_{b,1}\neq 0$. Three cases arise.

\medskip

Case 1: $P_{a,1}\neq 0$ and $Q_{b,1}\neq 0$. By Theorem \ref{main0}, the set $\mathcal{T}$ is
linearly independent if and only if $(a,b,c)\in\{(n,1,n),(n-1,2,n-1),(n,1,n-1),(n-1,1,n)\}$.

\medskip

Case 2:  $P_{a,1}=0$ and $Q_{b,1}\neq 0$. Suppose first $\mathcal{T}$ linearly independent.
The necessity part of the proof of Theorem \ref{main0} still implies that $(a,b,c)$ belongs to
$\{(n,1,n),(n-1,2,n-1),(n,1,n-1),(n-1,1,n)\}$. We will show that $P_{a-1,1}\neq 0$ and
$(a,b,c)=(n,1,n)$.

The fact that $P_{a,1}=0$ and \cite[Proposition 2.1]{CPS} imply that $P_{n-1}=0$.
If $b+c<n+1$ then $Q_{n-1}=0$, by \cite[Proposition 2.2]{CPS},
so $T_{i,n-1}=0$ for all $0\leq i<n-1$, a contradiction.
Thus $b+c=n+1$. Since $P_{a,1}=0$, every entry of $P_{n-2}$, except
perhaps for its top right entry, is equal to 0. By construction, $Q_{n-1}$ shares this property. Since
$$
T_{n-2,n-1}=P_{n-2}Q_{n-1}-P_{n-1}Q_{n-2}=P_{n-2}Q_{n-1}\neq 0,
$$
we infer $b=1$ and thus $c=n$.
Moreover, if $a<n$ then $b=1$, $P_{a,1}=0$ and  \cite[Proposition 2.1]{CPS} imply
$P_{n-2}=0$, so $T_{n-2,n-1}=0$, a contradiction.
Therefore $a=n$. Finally, if $P_{n-1,1}=0$ we obtain again $P_{n-2}=0$. Thus $P_{n-1,1}\neq 0$.

Finally, suppose  $(a,b,c)=(n,1,n)$ and $P_{a-1,1}\neq 0$. By deleting the last row of $P$
and arguing as in Case 1 for $(a',b',c')=(n-1,1,n)$, we obtain that $\mathcal{T}$ is linearly independent.

Case 3:  $P_{a,1}\neq 0$ and $Q_{b,1}=0$. This is completely analogous to Case 2.
\end{proof}

\section{Lemmata}\label{secnueva}

Recall the meaning of $\Phi$ given in \S\ref{secnew}.

\begin{lemma}\label{kernel}
Let $Y\in M_{a,b}$. Then $\Phi(Y)=0$ if and only if
\begin{equation}\label{aleb}
  Y=\begin{pmatrix}
 0 &\cdots &0 &  \nu_1& \nu_2  &\cdots & \nu_a  \\
  0 &\cdots &0 & 0 & \nu_1 &\ddots & \vdots   \\
  \vdots & &\vdots & \vdots &\vdots &\ddots&  \nu_2    \\
  0 &\cdots &0 & 0 &   0   &\cdots &\nu_1
 \end{pmatrix},\text{ if }a\leq b,
\end{equation}
\begin{equation}\label{blea}
 Y=
 \begin{pmatrix}
 \mu_1& \mu_2  &\cdots & \mu_b  \\
 0 & \mu_1 &\ddots & \vdots   \\
 \vdots &\vdots &\ddots&  \mu_2    \\
 0 & 0   &\cdots &\mu_1 \\
 0 & \cdots  &\cdots & 0\\
 \vdots & \cdots  &\cdots &\vdots \\
 0 &\cdots  &\cdots &0\\
 \end{pmatrix},\text{ if }b\leq a
\end{equation}
for some $\mu_i,\nu_i\in F$.
\end{lemma}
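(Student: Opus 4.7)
The plan is to give an entrywise description of $\Phi(Y)$ and then read off the shape of its kernel.

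Since $J^a(0)$ has $1$'s on the superdiagonal, left multiplication by $J^a(0)$ shifts the rows of $Y$ up by one (with a zero last row), and right multiplication by $J^b(0)$ shifts the columns of $Y$ one position to the right (with a zero first column). I would therefore obtain
\[
\Phi(Y)_{i,j}=Y_{i+1,j}-Y_{i,j-1},
\]
with the conventions $Y_{a+1,j}=0$ and $Y_{i,0}=0$. The equation $\Phi(Y)=0$ then splits into three families of conditions: the interior relations $Y_{i+1,j+1}=Y_{i,j}$ for $1\le i\le a-1$ and $1\le j\le b-1$, which say that $Y$ is constant along each line $j-i=\text{const}$; the $j=1$ relations, which force $Y_{k,1}=0$ for $2\le k\le a$; and the $i=a$ relations, which force $Y_{a,k}=0$ for $1\le k\le b-1$.

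Next, I would label the diagonals of $Y$ by $c=j-i\in\{-(a-1),\dots,b-1\}$, so that $Y$ is determined by the constants $y_c$ assigned to each diagonal. The first-column conditions kill all diagonals with $c\le -1$, while the last-row conditions kill all diagonals with $c\le b-1-a$. In the case $a\le b$ one has $b-1-a\ge -1$, so the last-row constraint is stronger, and exactly the $a$ diagonals $c=b-a,b-a+1,\dots,b-1$ remain free; naming the common value on the diagonal $c=b-a+(k-1)$ by $\nu_k$ produces precisely the matrix in \eqref{aleb}. In the case $b\le a$ the first-column constraint dominates, leaving free only the $b$ diagonals $c=0,1,\dots,b-1$; naming the value on $c=k-1$ by $\mu_k$ yields \eqref{blea}. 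The converse direction is immediate, since substituting \eqref{aleb} or \eqref{blea} back into the formula for $\Phi(Y)_{i,j}$ gives zero on each entry.

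There is no real obstacle here; the only care needed is the bookkeeping that identifies which of the two boundary constraints dominates in each case. Once the diagonals that must vanish are correctly indexed, both pictures fall out at once.
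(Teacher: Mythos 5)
Your proof is correct, and it takes a genuinely different route from the paper. You argue by direct linear algebra: from the entrywise formula $\Phi(Y)_{i,j}=Y_{i+1,j}-Y_{i,j-1}$ (with $Y_{a+1,j}=0$, $Y_{i,0}=0$) you extract the interior relations forcing constancy along the diagonals $j-i=\mathrm{const}$, plus the two boundary families $Y_{k,1}=0$ ($2\le k\le a$) and $Y_{a,k}=0$ ($1\le k\le b-1$), and then correctly determine which boundary constraint dominates: for $a\le b$ the last-row conditions kill all diagonals $c\le b-1-a$, leaving the $a$ diagonals $c=b-a,\dots,b-1$ free, while for $b\le a$ the first-column conditions kill $c\le-1$, leaving the $b$ diagonals $c=0,\dots,b-1$; this reproduces \eqref{aleb} and \eqref{blea} exactly, and the converse is the same computation read backwards. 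The paper instead views $M_{a,b}$ as an $\sl(2)$-module as in the proof of \cite[Proposition 2.1]{CPS}: by Clebsch--Gordan the nullity of $\Phi$ is the number $\min\{a,b\}$ of irreducible summands, and one then only needs to \emph{verify} that the displayed $\min\{a,b\}$-parameter families lie in the kernel, whence they span it by dimension count. Your argument is more elementary and self-contained (no appeal to \cite{CPS} or to $\sl(2)$-theory, and it \emph{derives} the shape of the kernel rather than confirming a guessed family), at the cost of some index bookkeeping; the paper's argument is shorter given the cited machinery and makes the dimension $\min\{a,b\}$ conceptually transparent. Both establish the same statement in full.
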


\begin{proof} View $M_{a,b}$ as an $\sl(2)$-module as in the proof of \cite[Proposition 2.1]{CPS}.
The nullity of $\Phi$ is the number $m=\min\{a,b\}$ of irreducible $\sl(2)$-submodules of $M_{a,b}$. On the other
hand, if $m=a$ (resp. $m=b$) we readily verify that $Y$ as in (\ref{aleb}) (resp. (\ref{blea})) satisfies $\Phi(Y)=0$.
\end{proof}





We say that $X\in M_{a,b}$ is a lowest matrix if $X_{a,1}=1$.

\begin{lemma}\label{lemma1}
Let $X_1\in M_{a,b_1}$, $X_2\in M_{b_2,c}$, $Y_1\in M_{b_1,c}$ and $Y_2\in M_{a,b_2}$.
Assume that $X_1$ and $X_2$ are lowest matrices, that
$$
(Y_1,Y_2)\ne(0,0),\;\Phi(Y_1)=0,\Phi(Y_2)=0,
$$
and set
\[
Z=X_1Y_1-Y_2X_2.
\]
If $Z=0$ then $a\le b_2$,  $c\le b_1$ and
 \begin{equation}\label{eq.Z_0}
  Y_2=\begin{pmatrix}
 0 &\cdots &0 &  \nu_1& \nu_2  &\cdots & \nu_a  \\
  0 &\cdots &0 & 0 & \nu_1 &\ddots & \vdots   \\
  \vdots & &\vdots & \vdots &\vdots &\ddots&  \nu_2    \\
  0 &\cdots &0 & 0 &   0   &\cdots &\nu_1
 \end{pmatrix},\qquad
 Y_1=
 \begin{pmatrix}
 \mu_1& \mu_2  &\cdots & \mu_c  \\
 0 & \mu_1 &\ddots & \vdots   \\
 \vdots &\vdots &\ddots&  \mu_2    \\
 0 & 0   &\cdots &\mu_1 \\
 0 & \cdots  &\cdots & 0\\
 \vdots & \cdots  &\cdots &\vdots \\
 0 &\cdots  &\cdots &0\\
 \end{pmatrix},
 \end{equation}
with $\mu_1=\nu_1\ne0$.
\end{lemma}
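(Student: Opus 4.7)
My plan is to combine Lemma \ref{kernel}, which pins down the shape of each of $Y_1$ and $Y_2$ according to a size-dependent dichotomy, with a short case analysis in which every unwanted subcase is killed by the same pivoting argument based on the lowest-matrix condition.

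\textbf{Step 1 (both factors are nonzero).} I would first show that $Y_1\ne 0$ and $Y_2\ne 0$ individually. If $Y_2=0$, then $X_1Y_1=0$, and I claim this forces $Y_1=0$. In either of the two shapes for $Y_1\in\ker\Phi$ given by Lemma \ref{kernel}, the free parameters appear in the successive (nonzero) columns of $Y_1$ in a staircase pattern; thus in each such column a new parameter appears as the top entry multiplied by the first column of $X_1$, plus a tail multiplied by later columns of $X_1$. Since $(X_1)_{a,1}=1$, reading off the $(a,j)$-entry of $X_1Y_1$ for $j=1,2,\dots$ kills the parameters one by one, forcing $Y_1=0$ and contradicting $(Y_1,Y_2)\ne(0,0)$. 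The case $Y_1=0$ is symmetric, inspecting rows of $Y_2X_2$ and using $(X_2)_{b_2,1}=1$.

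\textbf{Step 2 (shape of $Y_1$ and $Y_2$).} Suppose $Y_2$ had form (\ref{blea}) with $b_2<a$; then its last $a-b_2$ rows are zero, so row $a$ of $Y_2X_2$, and hence of $X_1Y_1$, vanishes, and the argument of Step 1 forces $Y_1=0$, contradicting the nonvanishing of $Y_1$. Hence $a\le b_2$ and $Y_2=(0\mid T_2)$ with $T_2\in M_{a\times a}$ upper triangular Toeplitz with diagonal entries $\nu_1,\dots,\nu_a$. Symmetrically, if $Y_1$ had form (\ref{aleb}) with $b_1<c$, its first $c-b_1$ columns would be zero, so column $1$ of $X_1Y_1$, and hence of $Y_2X_2$, would be zero; writing $Y_2X_2=T_2\tilde X_2$ with $\tilde X_2$ the last $a$ rows of $X_2$ (so $(\tilde X_2)_{a,1}=1$), the equation $T_2(\tilde X_2)_{*,1}=0$ swept from $i=a$ upward kills each $\nu_i$ in turn, contradicting $Y_2\ne 0$. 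Hence $c\le b_1$ and $Y_1=\begin{pmatrix}T_1\\0\end{pmatrix}$, with $T_1\in M_{c\times c}$ upper triangular Toeplitz with diagonal entries $\mu_1,\dots,\mu_c$.

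\textbf{Step 3 (leading entries).} With $\tilde X_1$ the first $c$ columns of $X_1$ and $\tilde X_2$ as above, the identity $Z=0$ reduces to $\tilde X_1T_1=T_2\tilde X_2$, where $(\tilde X_1)_{a,1}=(\tilde X_2)_{a,1}=1$. The $(a,1)$-entries of the two sides are $\mu_1$ and $\nu_1$, respectively, giving $\mu_1=\nu_1$. If this common value were zero, the first column of $\tilde X_1T_1$ would vanish, so would $T_2(\tilde X_2)_{*,1}$, and the same bottom-up sweep as in Step 2 would force $\nu_1,\dots,\nu_a=0$, contradicting $Y_2\ne 0$. The main technical point is the uniform pivoting argument reused across all three steps; once one records that the single $1$ at position $(a,1)$ of $X_1$ (respectively $(b_2,1)$ of $X_2$) is positioned exactly so as to expose one new Toeplitz parameter per column (respectively row), the whole analysis reduces to routine bookkeeping.
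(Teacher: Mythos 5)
Your proof is correct and takes essentially the same approach as the paper's: both rest on Lemma \ref{kernel} to pin down the shapes of $Y_1,Y_2$ and on the pivot $1$ at position $(a,1)$ of $X_1$ (resp.\ $(b_2,1)$ of $X_2$) to extract entries of $Z=X_1Y_1-Y_2X_2$. The only difference is organizational: where the paper extracts a single nonzero entry of $Z$ from the first nonzero column of $Y_1$ (resp.\ the last nonzero row of $Y_2$), disposing of $a>b_2$, $c>b_1$ and rank deficiency in one stroke, you run an iterative parameter-killing sweep along row $a$ and column $1$ and defer $\mu_1=\nu_1\ne0$ to a final step --- the same argument in contrapositive bookkeeping.
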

\begin{proof} If $Y_1\neq 0$, let $C_i$, $1\leq i\leq c$, be the first column of $Y_1$ that is non-zero.
By Lemma \ref{kernel}, we have
$$
C_i=\left(
  \begin{array}{c}
    \mu \\
    0 \\
    \vdots \\
    0 \\
  \end{array}
\right),\quad \mu\neq 0.
$$
Since $X_1$ is a lowest matrix, it follows that column $i$ of $X_1Y_1$ is equal to
$$
\left(
  \begin{array}{c}
    * \\
    \vdots \\
    *\\
    \mu \\
  \end{array}
\right)\quad \mu\neq 0.
$$
If $Y_2\neq 0$, let $R_j$, $1\leq j\leq b_2$, be the last row of $Y_2$ that is non-zero.
By Lemma~\ref{kernel}, we have
$$
R_j=(0,\dots,0,\nu),\quad\nu\neq 0.
$$
Since $X_2$ is a lowest matrix, it follows that row $j$ of $Y_2X_2$ is equal to
$$
(\nu,*,\dots,*),\quad\nu\neq 0.
$$

Since $(Y_1,Y_2)\neq 0$ and $Z=0$, we infer from above that $Y_1\neq 0$ and $Y_2\neq 0$. If either
if $a>b_2$ or $Y_2$ does not have full rank, then Lemma \ref{kernel} implies that the last row of $Y_2$ is 0, so by above $Z_{a,i}=\mu$, a contradiction.
Similarly, if either $c>b_1$ or $Y_1$ does not have full rank, then Lemma \ref{kernel} implies that the first column of $Y_1$ is~0, so by above $Z_{j,1}=-\nu$, a contradiction. Thus $a\le b_2, c\le b_1$ and, by Lemma \ref{kernel}, $Y_1$ and $Y_2$ are as described in (\ref{eq.Z_0})
with $\mu_1\neq 0$, $\nu_1\neq 0$. Since $Z_{a,1}=0$, we infer $\mu_1=\nu_1$.
\end{proof}

Given integers $a,b\geq 1$ and $\alpha\in F$ we consider matrices $f(\alpha),g(\alpha),h(\alpha)\in M_{a,b}$ of respective forms
$$
\left(\begin{array}{cccc}
                      0 & \dots & 0 & \alpha \\
                      0 & \dots & 0 & 0 \\
                      \vdots & \vdots & \vdots & \vdots \\
                      0 & \dots & 0 & 0\\
                    \end{array}
                  \right), \left(\begin{array}{cccc}
                      0 & \dots & 0 & * \\
                      \vdots & \vdots & \vdots & \vdots \\
                      0 & \dots & 0 & * \\
                      0 & \dots & 0 & \alpha\\
                    \end{array}
                  \right),\left(\begin{array}{cccc}
                      \alpha & * & \dots & *  \\
                      0 & \dots & \dots & 0 \\
                      \vdots & \vdots & \vdots & \vdots \\
                      0 & \dots & \dots & 0\\
                    \end{array}
                  \right),
$$
where the entries $*$ will play no role whatsoever.

\begin{prop}\label{reduccion}
Given $\al\in F$ and a sequence $(d_1,d_2,d_3,d_4)$ of positive integers, let $\h$ be
the subalgebra of $\gl(d)$, $d=d_1+d_2+d_3+d_4$, generated by $A$ and $X$, where

-- $A\in\gl(d)$ is the $0$-diagonal block matrix
\[
A= J^{d_1}(\alpha)\oplus J^{d_2}(\alpha-\lambda)\oplus J^{d_3}(\alpha-2\lambda)\oplus J^{d_4}(\alpha-3\lambda),
\]

-- $X\in\gl(d)$ is a $1$-diagonal block matrix whose blocks (1,2), (2,3), (3,4) satisfy
\[
 X(1,2)_{d_1,1}=X(2,3)_{d_2,1}=X(3,4)_{d_3,1}=1.
\]
Then $Y(1,4)=0$ for all $Y\in \h$ if and only if $(d_1,d_2,d_3,d_4)=(1,1,1,1)$.
\end{prop}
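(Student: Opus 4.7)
Write $P := X(1,2)$, $Q := X(2,3)$, $R := X(3,4)$, which are lowest matrices by hypothesis, and set $P_i := \Phi^i(P)$, $Q_i := \Phi^i(Q)$, $R_i := \Phi^i(R)$, and $Y_k := (\ad A - \la I)^k X \in \h$. Each $Y_k$ is a $1$-diagonal block matrix whose first superdiagonal is the triple $(P_k, Q_k, R_k)$. Since only four blocks are present, every element of $\h$ is a linear combination of $A$, the iterates $Y_k$, the $2$-diagonal brackets $[Y_i, Y_j]$, and the $3$-diagonal triple brackets $[Y_i, [Y_j, Y_k]]$; brackets of higher depth vanish, as their diagonal shift would exceed three. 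A direct computation yields
\[
 [Y_i, [Y_j, Y_k]](1, 4) = T_{i, j, k} := P_i(Q_j R_k - Q_k R_j) - (P_j Q_k - P_k Q_j) R_i,
\]
so the hypothesis that $Y(1, 4) = 0$ for every $Y \in \h$ is equivalent to $T_{i, j, k} = 0$ for all $i, j, k \geq 0$.

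The $(\Leftarrow)$ direction is immediate: when $(d_1, d_2, d_3, d_4) = (1, 1, 1, 1)$, the map $\Phi$ is identically zero on each $1 \times 1$ block, hence $Y_k = 0$ for $k \geq 1$ and $\h = \mathrm{span}(A, X)$; both $A$ and $X$ have zero $(1, 4)$ entry.

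For $(\Rightarrow)$, suppose some $d_\ell \geq 2$ and, seeking a contradiction, assume $T_{i, j, k} = 0$ for all $i, j, k$. It suffices to prove $P_1 = Q_1 = R_1 = 0$; indeed, $P_1 = 0$ means $P \in \ker \Phi_{d_1, d_2}$, and Lemma \ref{kernel} shows that a lowest matrix in this kernel forces $d_1 = d_2 = 1$ (in each of the two forms listed in Lemma \ref{kernel}, the entry at position $(d_1, 1)$ can equal $1$ only when $d_1 = d_2 = 1$); the analogous arguments on $Q$ and $R$ then give $(d_i) = (1, 1, 1, 1)$, contradicting $d_\ell \geq 2$. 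To derive $P_1 = Q_1 = R_1 = 0$, specialize: the identity $T_{0, 0, 1} = 0$ reads
\[
 P(QR_1 - Q_1 R) = (PQ_1 - P_1 Q)R,
\]
fitting the setup $X_1 Y_1 - Y_2 X_2 = 0$ of Lemma \ref{lemma1} with $X_1 = P$, $X_2 = R$ lowest and $Y_1 = QR_1 - Q_1 R$, $Y_2 = PQ_1 - P_1 Q$. The kernel hypotheses $\Phi(Y_1) = \Phi(Y_2) = 0$ are not automatic, but Leibniz for $\Phi$ gives $\Phi(Y_1) = QR_2 - Q_2 R$ and $\Phi(Y_2) = PQ_2 - P_2 Q$, which are the $k=2$ versions of the same pattern; iterating over the family $\{T_{0, 0, k} = 0\}_{k \geq 1}$ yields a descending chain of such identities, and eventually the shifted iterates of $Q$ and $R$ exceed the nilpotency indices of $\Phi_{d_2, d_3}$ and $\Phi_{d_3, d_4}$, at which point Lemma \ref{lemma1} applies. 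Its conclusion forces either $(Y_1, Y_2) = (0, 0)$ or the specific banded form with matching nonzero $\mu_1 = \nu_1$; the latter is excluded by cross-checking against $T_{1, 0, 1} = 0$, which becomes inconsistent with that form. Reading off the bottom-left entry of $Y_2 = PQ_1 - P_1 Q$ using that $Q$ is lowest yields $P_1 = 0$; symmetric arguments give $Q_1 = 0$ and $R_1 = 0$.

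\textbf{Main obstacle.} The delicate part is the last paragraph: making the iterative reduction to the hypotheses of Lemma \ref{lemma1} rigorous, and then ruling out the non-trivial banded-form conclusion. A viable alternative that sidesteps this issue is direct case analysis on $(d_1, d_2, d_3, d_4)$, halved via the duality of Lemma \ref{duy}; the small square-block cases (especially $(2, 2, 2, 2)$) require the most care, and may be settled by explicitly exhibiting, for each such quadruple, an explicit triple $(i, j, k)$ with $T_{i, j, k} \neq 0$ independently of the free entries of $X$.
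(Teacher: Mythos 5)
Your reduction is sound and coincides with the paper's implicit one: with only four blocks, the sole elements of $\h$ having a possibly nonzero $(1,4)$ block are the triple brackets, so the hypothesis is indeed equivalent to $T_{i,j,k}=0$ for all $i,j,k$, and your sufficiency argument is exactly the paper's ($[A,X]=\la X$ when all $d_i=1$). The gap is in the necessity direction, at precisely the step you flag as delicate, and it is twofold. First, your Leibniz iteration fails after one step: while $\Phi(QR_1-Q_1R)=QR_2-Q_2R$ holds because the cross terms $Q_1R_1$ cancel, for $k\geq 2$ one has
\[
\Phi(QR_k-Q_kR)=Q_1R_k+QR_{k+1}-Q_{k+1}R-Q_kR_1,
\]
so the one-parameter family $\{QR_k-Q_kR\}_k$ is \emph{not} $\Phi$-stable and your descending chain never reaches the kernel hypotheses of Lemma \ref{lemma1}. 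The $\Phi$-stable object is the full two-parameter family $Q_jR_k-Q_kR_j$ (the $(2,4)$ blocks of $[Y_j,Y_k]$), since $\Phi(Q_jR_k-Q_kR_j)=(Q_{j+1}R_k-Q_kR_{j+1})+(Q_jR_{k+1}-Q_{k+1}R_j)$. The paper circumvents this by choosing a single well-adapted bracket $U=[X,Z]$ with $Z=(\ad_{\gl(d)}A-\la 1_{\gl(d)})^{m-2}(X)$, $m=\max\{d_1+d_2,d_2+d_3,d_3+d_4\}$ — so that the blocks of $Z$ have the corner form $f(D_{i,j})$ with explicit constants $D_{i,j}=(-1)^{d_j-1}\binom{d_i+d_j-2}{d_i-1}$, uniformly in the free entries of $X$ — and then taking the \emph{maximal} nonvanishing iterate $V$ of the $2$-diagonal matrix $U$ (blockwise, a maximal $\Phi$-power), which simultaneously guarantees $\Phi(V(1,3))=\Phi(V(2,4))=0$ and $(V(1,3),V(2,4))\neq(0,0)$, the two hypotheses of Lemma \ref{lemma1} your sketch never secures.

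Second, even granting that Lemma \ref{lemma1} applies, your endgame does not follow. The lemma does not force $(Y_1,Y_2)=(0,0)$, and it cannot output $P_1=0$: at the terminal stage of any such iteration the matrices playing the roles of $Y_1,Y_2$ are deep iterates, not $QR_1-Q_1R$ and $PQ_1-P_1Q$, so "reading off the bottom-left entry of $Y_2=PQ_1-P_1Q$" inspects the wrong matrix. Excluding the surviving banded solution with $\mu_1=\nu_1\neq 0$ is where the real content lies, and it is not a one-line cross-check against $T_{1,0,1}=0$: the paper needs the rank constraints $d_1\leq d_3$ and $d_4\leq d_2$ furnished by Lemma \ref{lemma1}, plus a case analysis on which of $d_1+d_2$, $d_2+d_3$, $d_3+d_4$ is maximal, with the explicit constants $D_{i,j}$ violating $\mu_1=\nu_1$ in each surviving case — for instance $(d_1,d_2,d_3,d_4)=(2,2,2,2)$ is killed by a rank count ($k=1$ forces $\mathrm{rank}\,V(1,3)=1<2$), not by any corner entry. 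Your fallback of exhaustive case analysis on $(d_1,d_2,d_3,d_4)$ is not viable as stated either: there are infinitely many quadruples, and an exhibited $T_{i,j,k}\neq 0$ must hold for \emph{all} values of the unpinned entries of $X$, which again requires the maximal-iterate device to eliminate them uniformly. In short, your skeleton (reformulation via $T_{i,j,k}$, Lemmas \ref{kernel} and \ref{lemma1}) matches the paper's, but both the rigorous reduction to Lemma \ref{lemma1} and the exclusion of its exceptional conclusion — jointly the heart of the proof — are missing or incorrect as written.
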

\begin{proof} {\sc Sufficiency.} Suppose $(d_1,d_2,d_3,d_4)=(1,1,1,1)$. Then $[A,X]=\la X$, so  $Y(1,3)=Y(2,4)=Y(1,4)=0$ for all $Y\in\h$.

\smallskip

\noindent {\sc Necessity.} Suppose $Y(1,4)=0$ for all $Y\in\h$. Given $(i,j)$, $1\leq i<j\leq 4$, we set
$$
D_{i,j}=(-1)^{d_j-1}{{d_i+d_j-2}\choose{d_i-1}}.
$$
Let
$$m=\max\{d_1+d_2,d_2+d_3,d_3+d_4\}\text{ and }Z=(\ad_{\gl(d)} A-\la 1_{\gl(d)})^{m-2}(X)\in\h.
$$
Then $Z$ is a 1-diagonal block matrix, where
$$
Z(1,2)=\delta_{m,d_1+d_2}f(D_{1,2}),\; Z(2,3)=\delta_{m,d_2+d_3}f(D_{2,3}),\;Z(3,4)=\delta_{m,d_3+d_4}f(D_{3,4}).
$$
Set $U=[X,Z]$. Then $U$ is a 2-diagonal block matrix, where
$$
U(1,3)=\delta_{m,d_2+d_3}g(D_{2,3})-\delta_{m,d_1+d_2}h(D_{1,2}),
$$
$$
U(2,4)=\delta_{m,d_3+d_4}g(D_{3,4})-\delta_{m,d_2+d_3}h(D_{2,3}).
$$
Note that $U=0$ if and only if $(d_1,d_2,d_3,d_4)=(1,1,1,1)$. Suppose, if possible, that $(d_1,d_2,d_3,d_4)\neq (1,1,1,1)$.
Choose $k$ as large as possible such that $V=(\ad_{\gl(d)} A-\la 1_{\gl(d)})^k(U)\neq 0$. By hypothesis, $[X,V]=0$,
so Lemma \ref{lemma1} implies $\text{rank}\,V(1,3)=d_1\le d_3$ and $\text{rank}\,V(2,4)=d_4\le d_2$ (*). Several cases arise:

\smallskip

\noindent{\it Case 1.} $d_1+d_2=d_2+d_3>d_3+d_4$. We have $d_1=d_3$,
$d_4=\mathrm{rank}\,V(2,4)=1$ and $d_1=\mathrm{rank}\,V(1,3)\leq 2$.
From $d_4=1$ we infer $V=U$. Whether $d_1=1$ or $d_1=2$, we readily see
that the condition $\mu_1=\nu_1$ from Lemma \ref{lemma1} is violated.

\smallskip

\noindent{\it Case 1'.} $d_3+d_4=d_2+d_3>d_1+d_2$. This is dual to Case 1, and hence impossible.

\medskip

\noindent{\it Case 2.} $d_1+d_2=d_3+d_4>d_2+d_3$. Then $d_1>d_3$ and $d_4>d_2$, contradicting (*).

\smallskip

\noindent{\it Case 3.} $d_1+d_2>d_2+d_3,d_3+d_4$.  Then $d_1>d_3$, contradicting (*).

\smallskip

\noindent{\it Case 3'.} $d_3+d_4>d_2+d_3,d_1+d_2$. Then $d_4>d_2$, contradicting (*).

\smallskip

\noindent{\it Case 4.} $d_2+d_3>d_1+d_2,d_3+d_4$. In this case, $d_1=\mathrm{rank}\, V(1,3)=1$
and $d_4=\mathrm{rank}\,V(2,4)=1$, whence $V=U$.
We readily see that the condition $\mu_1=\nu_1$ from Lemma \ref{lemma1} is violated.

\smallskip

\noindent{\it Case 5.} $d_1+d_2=d_2+d_3=d_3+d_4$. We have $d_3=d_1=\mathrm{rank}\, V(1,3)\leq 2$ as well as $d_2=d_4=\mathrm{rank}\,V(2,4)\leq 2$. If $(d_1,d_2)=(2,2)$ then $k=1$ and thus $\mathrm{rank}\, V(1,3)=1=\mathrm{rank}\,V(2,4)$, contradicting (*). Whether $(d_1,d_2)=(2,1)$ or $(d_1,d_2)=(1,2)$, we have $V=U$, and we readily see that the condition $\mu_1=\nu_1$ from Lemma \ref{lemma1} is violated.
\end{proof}

\section{Classifying the relatively faithful uniserial representations of $\g$}\label{sec3}

We assume throughout this section that $\g=\langle x\rangle \ltimes L(V)$, where $x$ acts on $V$ via a single lower Jordan
block $J_n(\la)$, $n>1$, relative to a basis $v_0,\dots,v_{n-1}$ of $V$.

\begin{prop}\label{ensu} Suppose $\lambda\neq 0$ and let $T:\g\to\gl(U)$ be a relatively faithful uniserial
representation of dimension $d$.
Then there is a basis $\B$ of $U$, an integer $\ell>2$, a sequence of positive integers $(d_1,\dots,d_\ell)$
satisfying $d_1+\cdots+d_\ell=d$, and a scalar $\al\in F$,
such that the matrix representation $R:\g\to\gl(d)$ associated to $\B$ is normalized standard relative
to $(\ell, (d_1,\dots,d_\ell),\al)$.
\end{prop}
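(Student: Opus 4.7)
The plan is to build a basis $\B$ of $U$ adapted to the canonical filtration of Definition \ref{dele} and to the generalized eigenspace decomposition of $T(x)$, then to invoke \cite[Lemma 2.5]{CPS} to pass from standard to normalized standard form.

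First I would consider the filtration $0=U_0\subset U_1\subset\cdots\subset U_\ell=U$ and check $\ell>2$. Since $\lambda\ne0$ one verifies $[\g,\g]=L(V)$, so $L(V)U_i\subseteq U_{i-1}$. If $\ell=1$ then $L(V)U=0$, whence $V\subseteq \ker T$, contradicting relative faithfulness; if $\ell=2$ then $T(L(V))^2=0$ as operators, whence $T(\Lambda^2 V)=[T(L(V)),T(L(V))]=0$, also against hypothesis.

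Next, each $U_i/U_{i-1}$ is a $\g$-module on which $L(V)$ acts trivially, so uniseriality forces it to be a single Jordan block for $T(x)$, of some size $d_i$ and eigenvalue $\alpha_i$. The induced maps $\bar T(v_k)\colon U_{i+1}/U_i\to U_i/U_{i-1}$ satisfy, via $[T(x),T(v_k)]=\lambda T(v_k)+T(v_{k+1})$,
\[
\bar T(v_{k+1})=(\alpha_i-\alpha_{i+1}-\lambda)\,\bar T(v_k)+\Phi(\bar T(v_k)),
\]
where $\Phi(S)=N_iS-SN_{i+1}$ and $N_i$ is the Jordan nilpotent. Since $\bar T(v_n)=0$, setting $c_i:=\alpha_i-\alpha_{i+1}-\lambda$ we get $(c_iI+\Phi)^n\bar T(v_0)=0$; as $\Phi$ is nilpotent, $c_i\ne0$ would force $\bar T(v_0)=0$, and then the recursion yields $L(V)U_{i+1}\subseteq U_{i-1}$, against the minimality of $U_i$ in Definition \ref{dele}. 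Thus $\alpha_i=\alpha-(i-1)\lambda$ with $\alpha:=\alpha_1$. Because $\lambda\ne0$ the $\alpha_i$ are pairwise distinct, so $U=V_1\oplus\cdots\oplus V_\ell$ splits into generalized $T(x)$-eigenspaces with $U_i=V_1\oplus\cdots\oplus V_i$, each $V_i$ a single Jordan block of size $d_i$. A Jordan basis of each $V_i$ yields $\B$, and $T(x)$ acts as the required block-diagonal Jordan matrix. The same commutation argument, applied to the $(j,i)$-block of $T(v_k)$, kills it for $j\ge i$ (by the filtration) and for $j<i-1$ (since $(i-j-1)\lambda\ne0$ makes the associated operator invertible), so each $T(v_k)$ is $1$-diagonal and each $T(v\wedge w)=[T(v),T(w)]$ is $2$-diagonal.

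The main obstacle is verifying $(M_i)_{d_i,1}\ne0$ and $d_i+d_{i+1}\le n+1$, where $M_i$ denotes the $(i,i+1)$-block of $T(v_0)$. For the first, I would use uniseriality of the sub-quotient $U_{i+1}/U_{i-1}$: its $\g$-submodules correspond to pairs $(W',W'')$ of $N$-stable subspaces of $V_i,V_{i+1}$ with $W'$ containing the $N_i$-closure of $T(v_0)W''$, and a direct analysis shows the resulting lattice is a chain precisely when the $N_i$-orbit of the first column of $M_i$ spans all of $V_i$, equivalently when $(M_i)_{d_i,1}\ne0$. For the second, I would combine this with $\Phi^nM_i=0$ (which follows from $T(v_n)=0$ together with $c_i=0$): a direct computation of the $(1,d_{i+1})$-entry of $\Phi^{d_i+d_{i+1}-2}M_i$ via $(\Phi^m M)_{k,l}=\sum_j(-1)^j\binom{m}{j}M_{k+m-j,l-j}$ leaves a single surviving term proportional to $(M_i)_{d_i,1}$, hence non-zero; together with $\Phi^nM_i=0$ this forces $d_i+d_{i+1}-1\le n$. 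A final conjugation by a block-diagonal matrix commuting with $T(x)$, as in \cite[Lemma 2.5]{CPS}, yields normalized standard form.
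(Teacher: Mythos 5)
Your proposal is correct and takes essentially the same route as the paper, whose entire proof consists of noting that the argument of \cite[Theorem 3.2]{CPS} applies almost verbatim; what you have written is a faithful self-contained reconstruction of exactly that argument (the filtration of Definition \ref{dele} with single Jordan blocks on quotients, eigenvalue spacing $\alpha_i=\alpha-(i-1)\lambda$ forced by nilpotency of $\Phi$, the splitting into generalized eigenspaces and $1$-diagonality since $\lambda\neq 0$, non-vanishing bottom-left entries from uniseriality of the length-two subquotients, the bound $d_i+d_{i+1}\leq n+1$ from $\Phi^{d_i+d_{i+1}-2}(M_i)\neq 0$ versus $\Phi^{n}(M_i)=0$, and normalization via \cite[Lemma 2.5]{CPS}). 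The only slip is cosmetic: when $c_i\neq 0$ forces $\bar T(v_0)=0$ and hence $L(V)U_{i+1}\subseteq U_{i-1}$, the contradiction is with the \emph{maximality} of $U_i/U_{i-1}$ as the full annihilator of $[\g,\g]$ in $U/U_{i-1}$ (so that $U_{i+1}\subseteq U_i$), not with any minimality in Definition \ref{dele}.
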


\begin{proof} Noting that $[\g,\g]=V\oplus \Lambda^2(V)$ and $[[\g,\g],[\g,\g]]=\Lambda^2(V)$,
the proof of \cite[Theorem 3.2]{CPS} applies almost verbatim to yield the desired result.
\end{proof}

\begin{theorem}\label{main1} Suppose $\lambda\neq 0$. Then $\g$ has no relatively
faithful uniserial representations of length $>3$.
\end{theorem}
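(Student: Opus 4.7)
The plan is to assume $R \colon \g \to \gl(U)$ is a relatively faithful uniserial representation of length $\ell \ge 4$ and derive a contradiction by combining Proposition \ref{ensu}, Proposition \ref{reduccion}, and Lemma \ref{funk}. First I would apply Proposition \ref{ensu} to pass to a basis $\B$ of $U$ in which the matrix representation is normalized standard relative to some tuple $(\ell,(d_1,\ldots,d_\ell),\alpha)$, and from then on identify $R$ with this matrix form, writing $A = R(x)$ and $X = R(v_0)$.

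The key observation is that the spanning elements of $\g$ map to $0$-, $1$-, or $2$-diagonal block matrices (namely $A$, the $R(v_k)$, and the $R(v_i \wedge v_j) = [R(v_i), R(v_j)]$, respectively), so every $Y \in R(\g)$ satisfies $Y(r,s) = 0$ whenever $s \ge r + 3$. Because $R$ is a Lie homomorphism, the Lie subalgebra $\s$ of $\gl(d)$ generated by $A$ and $X$ is contained in $R(\g)$ and therefore inherits the same vanishing property.

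The heart of the argument is to fix any $i$ with $1 \le i \le \ell - 3$ and restrict attention to the four consecutive block rows and columns indexed by $i, i+1, i+2, i+3$. Since every element of $\s$ is block upper triangular, this restriction defines a Lie algebra homomorphism into $\gl(d_i + d_{i+1} + d_{i+2} + d_{i+3})$, and the images $A'$, $X'$ of $A$, $X$ fulfill exactly the hypotheses of Proposition \ref{reduccion}: the required condition on the bottom-left entries of the three nonzero blocks of $X'$ is precisely the normalization requirement that the last row of each block in the first superdiagonal of $R(v_0)$ equals the first canonical vector. Since the restriction of $\s$ still has zero $(1,4)$ block, Proposition \ref{reduccion} forces $(d_i, d_{i+1}, d_{i+2}, d_{i+3}) = (1,1,1,1)$.

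Letting $i$ range over all allowed values yields $d_1 = \cdots = d_\ell = 1$, so $d_j + d_{j+1} = 2$ for every $j$. However, relative faithfulness requires $\ker(R) \cap V = (0)$, and Lemma \ref{funk} then forces $d_j + d_{j+1} = n+1$ for some $j$. Since $n > 1$, this is impossible. The only slightly delicate point is justifying that restriction to four consecutive blocks really is a Lie algebra homomorphism, which is immediate from the block upper triangular structure of $\s$; the genuine combinatorial difficulty of the theorem has been absorbed into Proposition \ref{reduccion}.
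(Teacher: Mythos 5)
Your proof is correct and takes essentially the same route as the paper's: Proposition \ref{ensu} to pass to normalized standard form, restriction to four consecutive diagonal blocks (where the corner projection argument you give is exactly what makes Proposition \ref{reduccion} applicable), and Lemma \ref{funk} to close the contradiction. The only difference is bookkeeping: you apply Proposition \ref{reduccion} to every window of four consecutive blocks to force $d_1=\cdots=d_\ell=1$ and then contradict Lemma \ref{funk} using $n>1$, whereas the paper first invokes Lemma \ref{funk} to choose a window containing indices with $d_i+d_{i+1}=n+1$, so that the restricted tuple cannot be $(1,1,1,1)$, and contradicts Proposition \ref{reduccion} directly.
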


\begin{proof} Let $T:\g\to\gl(U)$ be a relatively faithful representation. By Proposition \ref{ensu},
there is a basis $\B$ of $U$, an integer $\ell>2$, a sequence of positive integers $(d_1,\dots,d_\ell)$
satisfying $d_1+\cdots+d_\ell=d$,
and a scalar $\al\in F$ such that the matrix representation $R:\g\to\gl(d)$ associated to $\B$
is normalized standard relative to $(\ell, (d_1,\dots,d_\ell),\al)$.

Suppose, if possible, that $\ell>3$.  By Lemma \ref{funk}, there is some $i$ such that $d_i+d_{i+1}=n+1$.
Since $\ell>3$, we may consider the representation of $\g$, say $S$, obtained from $R$ by choosing any set of four
contiguous indices taken from $\{1,\dots,\ell\}$ including $i$ and $i+1$. Then $\ker(S)\cap V=(0)$ by Lemma \ref{funk}.
Moreover, $\Lambda^2(V)$ is not contained in $\ker(S)$ because $S$ involves a non-zero 2-diagonal block matrix, as indicated in the proof of Proposition \ref{reduccion}.

We may thus assume without loss of generality that $\ell=4$ and $(d_1,d_2,d_3,d_4)\neq (1,1,1,1)$.
Since $R$ is a representation and $\Lambda^2 V$ commutes with $V$,
it follows from the shape of the matrices in $R(\g)$ that block $(1,4)$
of $R(x)$ is zero for all $x\in\g$, which contradicts Proposition \ref{reduccion}.
\end{proof}

\begin{theorem}\label{main2} Suppose $\lambda\neq 0$. Then every
relatively faithful uniserial representation of $\g$ is isomorphic to one and
only one normalized representation $R_{a,b,c,M,N,\al}$ of non-extreme type.
\end{theorem}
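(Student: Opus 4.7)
The plan is to assemble the theorem from pieces already in place. Let $T : \g \to \gl(U)$ be a relatively faithful uniserial representation. First I would apply Proposition \ref{ensu} to replace $T$ (up to isomorphism) by a matrix representation $R$ that is normalized standard relative to some $(\ell, (d_1,\dots,d_\ell), \al)$.

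Next I would pin down $\ell = 3$. The upper bound $\ell \le 3$ is exactly Theorem \ref{main1}. For the lower bound, since $\la \ne 0$ one checks (inducting downwards on $v_{n-1}, v_{n-2}, \dots$) that $V \subseteq [\g,\g]$, whence $[\g,\g] = V \oplus \Lambda^2(V)$ and $[[\g,\g],[\g,\g]] = [V,V] = \Lambda^2(V)$. If $\ell \le 2$, then $[[\g,\g],[\g,\g]]$ annihilates $U$, forcing $\Lambda^2(V) \subseteq \ker R$, which contradicts relative faithfulness. So $\ell = 3$.

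Now set $(a,b,c) = (d_1, d_2, d_3)$, and take $M, N$ to be the blocks $(1,2), (2,3)$ of $R(v_0)$. The standard form of $R(x)$ matches (\ref{pet}); relation (\ref{rela3}) applied inside $\gl(d)$ identifies $R(v_k)$ with (\ref{pet2}); and (\ref{rela}) gives (\ref{pet3}). Normalization of $R$ translates directly to $M, N$ being normalized in the sense of Definition \ref{defeabc}. It remains to verify condition (\ref{cond}). The standard hypothesis gives $a+b \le n+1$ and $b+c \le n+1$; Lemma \ref{funk}, applied together with $\ker R \cap V = 0$, yields at least one equality. A short case analysis then gives (\ref{cond}): if $a+b = n+1$ then $c \le n+1-b = a$; if instead $b+c = n+1$ and $a+b < n+1$, then $a < n+1-b = c$. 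Thus $R = R_{a,b,c,M,N,\al}$.

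Finally, non-extreme type follows because, by Proposition \ref{normal2}, a normalized $R_{a,b,c,M,N,\al}$ of extreme type has $\Lambda^2(V) \subseteq \ker R$, contradicting relative faithfulness. Uniqueness of the isomorphism class is the first assertion of Proposition \ref{normal2}. The real work (Theorem \ref{main1}, Propositions \ref{ensu}, \ref{normal2}) is already behind us; this argument is mainly bookkeeping, with the only mildly delicate step being the case analysis that recovers (\ref{cond}) from the standard-form inequalities together with Lemma \ref{funk}.
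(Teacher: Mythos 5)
Your proposal is correct and takes essentially the same route as the paper's proof: Proposition \ref{ensu} to put $T$ in normalized standard form, Theorem \ref{main1} to force $\ell=3$, Lemma \ref{funk} together with the standard-form inequalities $a+b\leq n+1$, $b+c\leq n+1$ to recover (\ref{cond}), and Proposition \ref{normal2} for both the non-extreme type and uniqueness. The only difference is that your separate argument for the lower bound $\ell\geq 3$ is redundant, since Proposition \ref{ensu} already asserts $\ell>2$ (your argument is in fact the reason behind that assertion, as noted in its proof).
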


\begin{proof} Let $T:\g\to\gl(U)$ be a relatively faithful representation of dimension $d$. By Proposition \ref{ensu}, there
is a basis $\B$ of $U$, an integer $\ell>2$, a sequence of positive integers $(d_1,\dots,d_\ell)$ satisfying
$d_1+\cdots+d_\ell=d$, and a scalar $\al\in F$ such that the matrix representation $R:\g\to\gl(d)$ associated
to $\B$ is normalized standard relative to $(\ell, (d_1,\dots,d_\ell),\al)$.

Theorem \ref{main1} gives $\ell=3$.
Set $(a,b,c)=(d_1,d_2,d_3)$. We have $a+b\leq n+1$ and $b+c\leq n+1$, with equality holding in at least one case, by Lemma \ref{funk}.
Thus $a+b=n+1$ and $c\leq a$, or $b+c=n+1$ and $a\leq c$. It follows that $R$ is isomorphic to $R_{a,b,c,M,N,\al}$, where $M$ and $N$
are the blocks in the first superdiagonal of $R(v_0)$, and $R_{a,b,c,M,N,\al}$ is of non-extreme type by Proposition \ref{normal2}.
Uniqueness follows from Proposition \ref{normal2}.
\end{proof}

\section{Further cases}

We assume throughout this section that $\g=\langle x\rangle \ltimes L(V)$, where $x\in\GL(V)$.
When the Jordan decomposition of $x$ acting on $V$ has more than one block, other representations are possible. As an illustration, let $m,n\geq 1$, let $\la,\mu\in F$ (we allow the case $\la=\mu$), and suppose $v_0,\dots,v_{n-1},w_0,\dots,w_{m-1}$ is a basis of $V$ relative to which
$$
[x,v_0]=\la v_0+v_1, [x,v_1]=\la v_1+v_2,\dots, [x,v_{n-1}]=\la v_{n-1},
$$
$$
 [x,w_0]=\mu w_0+w_1, [x,w_1]=\mu w_1+w_2,\dots, [x,w_{m-1}]=\mu w_{m-1}.
$$
Let $(a,b,c)$ be a triple of positive integers satisfying
$$a+b=n+1,\; b+c=m+1,
$$
suppose $M\in M_{a\times b}$ and $N\in M_{b\times c}$ satisfy $M_{a,1}\neq 0$ and $N_{b,1}\neq 0$, and let $\al\in F$.
We may then define the uniserial representation $S=S_{a,b,c,M,N,\al}:\g\to\gl(d)$, $d=a+b+c$, as follows:
$$
S(x)=A=\left(
               \begin{array}{ccc}
                 J^a(\al) & 0 & 0 \\
                 0 & J^b(\al-\la) & 0 \\
                 0 & 0 & J^c(\al-\la-\mu) \\
               \end{array}
             \right),
$$
$$
S(v_k)=(\ad_{\gl(d)} A-\la 1_{\gl(d)})^k \left(
               \begin{array}{ccc}
                 0 & M & 0 \\
                 0 & 0 & 0 \\
                 0 & 0 & 0 \\
               \end{array}
             \right),\quad 0\leq k\leq n-1,
$$
$$
S(w_k)=(\ad_{\gl(d)} A-\la 1_{\gl(d)})^k \left(
               \begin{array}{ccc}
                 0 & 0 & 0 \\
                 0 & 0 & N \\
                 0 & 0 & 0 \\
               \end{array}
             \right),\quad 0\leq k\leq m-1.
$$
The fact that $a+b=n+1$ and $b+c=m+1$, together with \cite[Proposition 2.2]{CPS}, ensure that $\ker(S)\cap V=(0)$. Moreover,
since $S(v_0\wedge w_{b-1})\neq 0$, it follows that $\Lambda^2(V)$ is not contained in $\ker(S)$. Thus, $S$ is relatively faithful.

We may imbed $\g$ as a subalgebra of $\g'=\langle x'\rangle\ltimes  L(V')$, where $x'$ has Jordan decomposition
$$
J_{n_1}(\la)\oplus\cdots\oplus J_{n_e}(\la)\oplus J_{m_1}(\mu)\oplus\cdots\oplus J_{m_f}(\mu),
$$
where
$$
n=n_1\geq\dots \geq n_e,\quad m=m_1\geq \dots \geq m_f,
$$
$$
n_2\leq n-2,\; n_3\leq n-4,\; n_4\leq n-6,\dots,\; n_e\leq n-2(e-1),
$$
$$
m_2\leq m-2,\; m_3\leq m-4,\; m_4\leq m-6,\dots,\; m_f\leq m-2(f-1),
$$
$$
e\leq \mathrm{min}\{a,b\},\; f\leq \mathrm{min}\{b,c\}.
$$
Then \cite[Theorem 4.1]{CPS} ensures that we may extend the above representation $S$ of $\g$
to a uniserial representation $S'$ of $\g'$ in such that a way that we still have $\ker(S')\cap V'=(0)$.
Since $\Lambda^2(V)$ is not contained in $\ker(S)$, it follows automatically that $\Lambda^2(V')$ is not contained in $\ker(S')$.
Thus, $S'$ is also relatively faithful.

If $n>1$ (resp. $m>1$) then $S$ (and therefore $S'$) is not faithful, as all wedges $v_i\wedge v_j$ (resp. $w_i\wedge w_j$)
are in the kernel of $S$.

The case $n=1$ and $m=1$ leads to the representation $S_{\al}:\g\to\gl(3)$, given by
$$
x\mapsto \left(
           \begin{array}{ccc}
             \alpha & 0 & 0 \\
             0 & \alpha-\la & 0 \\
             0 & 0 & \alpha-\la-\mu \\
           \end{array}
         \right),
$$
$$
v_0\mapsto \left(
           \begin{array}{ccc}
             0 & 1 & 0 \\
             0 & 0 & 0 \\
             0 & 0 & 0 \\
           \end{array}
         \right),
            w_0\mapsto \left(
           \begin{array}{ccc}
             0 & 0 & 0 \\
             0 & 0 & 1 \\
             0 & 0 & 0 \\
           \end{array}
         \right), v_0\wedge w_0 \mapsto \left(
           \begin{array}{ccc}
             0 & 0 & 1 \\
             0 & 0 & 0 \\
             0 & 0 & 0 \\
           \end{array}
         \right).
$$
This is a faithful uniserial representation.

Suppose next that $x$ acts diagonalizably on $V$, as in the preceding example.
Depending on the nature of the eigenvalues of $x$, there may be other examples of relatively faithful uniserial representations.
Indeed, let $\g=\langle x\rangle\ltimes L(V)$,
where $n>1$, $\la\in F$ and $v_1,\dots,v_n$ is a basis of $V$ such that $$xv_1=i_1\la v_1,xv_2=i_2\la v_2,\dots,xv_n=i_n\la v_n,$$
for positive integers $1=i_1<i_2<\dots<i_n$. Setting $p=i_n+2$ and $J=J^p(0)$, we may then define the uniserial representation $T:\g\to\gl(p)$,
as follows:
$$
x\mapsto \mathrm{diag}(\al,\al-\la,\dots,\al-(p-1)\la),
$$
$$
v_1\mapsto J^{i_1},\; v_2\mapsto J^{i_2},\dots,v_{n-1}\mapsto J^{i_{n-1}}, v_n\mapsto \be E^{1,p-1}+\ga E^{2,p}.
$$
Here we require $\be\neq\ga$ to ensure that $\Lambda^2(V)$ is not contained in $\ker(T)$. Since $\ker(T)\cap V=(0)$,
it follows that $T$ is relatively faithful. Note that $T$ is only faithful when $n=2$.



\end{document}